\pgfplotsset{compat=1.18}
\providecommand{\MR}{\relax\ifhmode\unskip\space\fi MR }
\numberwithin{equation}{section}
\colorlet{mycyan}{cyan!20}
\colorlet{mymagenta}{magenta!30}
\colorlet{myorange}{orange!40}
\newtheorem{theorem}{Theorem}[section]
\newtheorem{lemma}[theorem]{Lemma}
\theoremstyle{definition}
\newtheorem{definition}[theorem]{Definition}
\newtheorem{remark}[theorem]{Remark}
\newtheorem*{remark*}{Remark}
\newtheorem{corollary}[theorem]{Corollary}
\newtheorem{proposition}[theorem]{Proposition}
\newcommand{\wt}[1]{\widetilde{#1}}
\newcommand{\wh}[1]{\widehat{#1}}
\newcommand{\kip}{[\,\cdot\, , \cdot\,]}
\newcommand{\cK}{{\mathcal K}}
\newcommand{\cH}{{\mathcal H}}
\newcommand{\R}{\mathbb{R}}
\newcommand{\C}{\mathbb{C}}
\newcommand{\N}{\mathbb{N}}
\newcommand{\Z}{\mathbb{Z}}
\newcommand{\mD}{\mathcal{D}}
\DeclareMathOperator{\sign}{sign}
\DeclareMathOperator{\linspan}{span}
\newcommand{\I}{\mathrm i} %% imaginary unit
\newcommand{\rd}{\mathrm d} %% for integrals
\newcommand{\scalar}[2]{\langle#1,\, #2\rangle}
\newcommand{\kscalar}[2]{[#1,\, #2]}
\newcommand{\define}[1]{\textit{#1}}
\newcommand{\pointspec}{\sigma_{\mathrm p}}
\newcommand{\DiffInt}{\Aj_0}  %%% Operator on the interval [0,1] with Dirichlet BC
\newcommand{\Aj}[1][j]{A^{(#1)}}  %%% Operator on the vertex I_k
\newcommand{\AK}{A}
\newcommand{\AKone}{B}
\newcommand{\SK}{S}
\newcommand{\TK}{T}
\newcommand{\MK}{M}
\newcommand{\gammaK}{\gamma}
\newcommand{\eigf}[1]{u_{#1}}  %% Eigenfunction \ef{k} = u_{k}, kth eigenfunction on the interval [0, 1]
\DeclareMathOperator{\ran}{ran}
\DeclareMathOperator{\dom}{\mD}
\title{1-D  Schr\"odinger operator on a star graph with nondefinite weight function}
\author{Edison Leguizam\'on, Carsten Trunk, Mitsuru Wilson, and Monika Winklmeier\\ 
\\
Dedicated to the memory of Heinz Langer, whom we deeply admired.\\
His legacy will live on in our practices.\\
}
\begin{document}

\maketitle
\begin{abstract}
We investigate the indefinite Kirchhoff Laplacian on a star graph \(G\) with \(n = n_+ + n_-\) edges of unit length, where the differential operator acts as \(-\frac{d^2}{dx^2}\) on \(n_+\) edges and \(\frac{d^2}{dx^2}\) on \(n_-\) edges. The operator is subject to Dirichlet conditions at the outer vertices and Kirchhoff conditions at the central vertex. We establish that this operator is similar to a self-adjoint operator in the Hilbert space \(L^2(G)\) and that its eigenfunctions constitute a Riesz basis. Additionally, we provide a complete characterization of its point spectrum.
\end{abstract}

\section{Introduction}
\noindent
The study of differential operators on metric graphs has attracted considerable attention over the past decades. Much of the existing literature has focused on selfadjoint Schr\"{o}dinger operators (see the monographs \cite{BerkolaikoKuchment, BEH08, K24} and the seminal paper of V.\ Kostrykin and R.\ Schrader \cite{KS99}). 

The present work is motivated by the growing interest in spectral theory on network structures and by the renewed relevance of non-selfadjoint operators in quantum mechanics (see, e.g.,\ \cite{HKS15}). There have been recent attempts to formulate “quasi-Hermitian quantum mechanics”, where physical observables are represented by non-selfadjoint operators $T$ satisfying the quasi-selfadjointness relation
\begin{equation}
\label{Theta}
	T^* = \Theta T \Theta^{-1}
\end{equation}
for a bounded, boundedly invertible operator $\Theta$ (see, e.g., \cite{SGH92}). In other words, $T$ is similar to a selfadjoint operator. 

In \cite{CurgusNajman1995}, B.~{\'C}urgus and B.~Najman considered the differential operator $\frac{\rd^2}{\rd x^2}$ on $\R^-$ and $-\frac{\rd^2}{\rd x^2}$ on $\R^+$ in $L^2(\mathbb R)$ defined on the Sobolev space $H^2(\R)$. This operator serves as one of the pioneering examples of a simple differential operator that, while not selfadjoint in the standard $L^2$-space over $\R$, is nevertheless spectrally well understood as it satisfies \eqref{Theta}. From the view point of graphs, this operator is the (indefinite) Kirchhoff Laplacian  defined on a star graph with two infinite edges (namely $\mathbb R^-$ and $\mathbb R^+$) and a vertex in zero. We refer to \cite{CurgusLanger1989} for a broader class of such differential operators and to \cite{F24} for an overview, see also \cite{CDT24}. 

\smallskip
The present paper can be viewed as a generalization of these results to graphs. It follows the way paved by the seminal paper of B.~{\'C}urgus and H.\ Langer \cite{CurgusLanger1989} and uses Krein space techniques developed by the late M.G.\ Krein and H.\ Langer, we mention here only  \cite{IKL,K70,L65,La}.
These techniques were originally used to develop a spectral theory of (monic) selfadjoint operator pencils \cite{L67,L73,L74,L75}.

Specifically, we study here the spectral properties of the indefinite Kirchhoff Laplacian on a star graph $G$ with $n$ edges $I_1,\, \dots,\, I_n$, each of unit length, and all joined at the central vertex $0$. 
We will work in the space of square integrable functions on $G$ given by
\begin{equation}
\label{eq:hilbert_graph}
	L^2(G) := \bigoplus_{j=1}^n L^2(I_j) = 
    \bigoplus_{j=1}^n L^2((0,1)). 
\end{equation}
We denote $f_j := f|_{I_j}$ for a function $f=(f_1,\, \dots,\, f_n)\in L^2(G)$.
Our goal is to investigate the spectral properties of the indefinite Kirchhoff Laplacian $\AKone$ on the star graph $G$ with Dirichlet conditions at the outer vertices and a Kirchhoff condition at the central vertex in $L^2(G)$.
Let us denote 
\begin{equation}
   \label{eq:hilbert_graph.dirichlet}
   \widehat H^1_D(G) := \{ f\in L^2(G) : 
   f_j\in H^1(I_j) \text{ for all } j \text{ and }
   f_1(1) = \cdots = f_n(1) = 0 \}.
\end{equation}

The corresponding operator $\AKone$ is then defined as
\begin{align}
\label{DerAuserwaehlte}
   (\AKone f)_j = 
   \begin{cases}
      -f_j''\quad &\text{if }\ 1 \le j \le n_+,\\
      f_j''\quad &\text{if }\  n_+ + 1 \le j \le n,
   \end{cases}
\end{align}
on the domain
\begin{align}
\label{ElDominio}
   \mD(B) = \left\{ f\in \widehat H_D^1(G)\ : \
   f_j\in H^2(I_j) \text{ for all } j \text{ and }
   f_1(0) = \dots = f_n(0), ~ \sum_{j=1}^n f_j'(0) = 0
\right\}.
\end{align}
Using boundary triple techniques and the associated Weyl function (see the next section below) we show in Section \ref{spectrum} that $B$ is a positive selfadjoint operator in the sense of Krein spaces and give a complete description of its eigenvalues.
In particular one can recover from the spectrum parts of the topology of the graph $G$, namely the ratio between $n_+$ and $n_-$. Finally, as our second main result, we show that the indefinite Kirchhoff Laplacian $B$ satisfies \eqref{Theta}. In other words, $B$ is similar to a selfadjoint operator.

The last result was already shown in \cite{CW} with different techniques in a more general setting. We mention also the recent paper by S.\ Nicaise \cite{Nic}, where a similar problem is considered, but the sign change is put between the first and second derivative in the definition of $B$, which leads to a somehow different model. The boundary conditions at the central vertex are different from ours and make the corresponding Kirchhoff Laplacian in \cite{Nic} then selfadjoint in a Hilbert space.

\medskip
{\bf Notations.}
By $\mathbb{C}$ we denote the set of complex numbers and by $\mathbb{R}$ the set of real numbers. Here $\mathbb N = \{1, 2, 3, \dots\}$ stands for the natural non-zero numbers and $\mathbb Z$ the integers. All operators are closed densely defined linear operators. For such an operator $T$, we use the common notation $\rho(T)$, $\sigma(T)$, $\pointspec(T)$, $\ran T$ and $\mD(T)$ for its resolvent set, spectrum, point spectrum, range and domain, respectively. 
Moreover, $H^1((0,1))$ stands for the first Sobolev space, that is, the subset of all functions $f\in L^2((0,1))$ such that $f$ has a first weak derivative and $f$ and $f'$ have a finite $L^2$-norm. 
Note that any $f\in H^1((0,1))$ is continuous in $(0,1)$ with a continuous extension to the endpoints $0$ and $1$.
Analogously, $H^2((0,1))$ stands for the second Sobolev space which is the subset of all functions $f\in H^1((0,1))$ such that $f'\in H^1((0,1))$. 
By $H^1_0((0,1))$ we denote all functions $f\in H^1((0,1))$ with $f(0)=f(1)=0$ and by $\mathcal C_0^\infty((0,1))$ the space of all infinitely often differentiable functions on the interval $[0,1]$ with compact support in $(0,1)$.

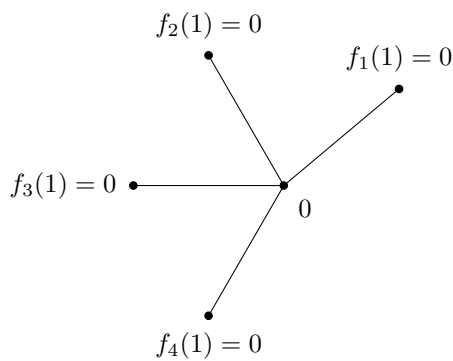
\begin{figure}[h]
   \begin{center}
      \begin{tikzpicture}[transform shape, baseline=(D)] %%%{{{
	 \coordinate (O) at (0,0);
	 \coordinate (A) at (40:2);
	 \coordinate (B) at (120:2);
	 \coordinate (C) at (180:2);
	 \coordinate (D) at (240:2);
	 \coordinate (E) at (330:2);
	 \coordinate (F) at (310:2);
	 \foreach \x in {A,B,C,D} \draw (O) -- (\x);
	 \foreach \x in {A,B,C,D,O}  \draw[fill] (\x) circle (0.05cm);
	 \node at (A) [above=.1] {$f_1(1) = 0$};
	 \node at (B) [above=.1] {$f_2(1) = 0$};
	 \node at (C) [left=.1] {$f_3(1) = 0$};
	 \node at (D) [below=.1] {$f_4(1) = 0$};
	 % \node at (E) [below] {$f_5(1) = 0$};
	 % \node at (F) [below] {$f_6(1) = 0$};
	 \node at (O) [below right=.1] {$0$};
      \end{tikzpicture}
   \end{center}
   \caption{A star graph with 4 edges, 4 outer vertices and one central vertex (identified with zero) indicating that Dirichlet boundary conditions prevail at the four outer vertices.}
   \label{Bischleben}
\end{figure}

\section{Preliminaries: Non-negative operators in Krein spaces}
\label{sec:interval}

\noindent
A Krein space is a complex vector space $\cK$ equipped with a sesquilinear form $\kip$ such that there exist subspaces $\cH_+$ and $\cH_-$ of $\cK$  such that both $(\cH_-,\, -\kip)$ and $(\cH_+,\, \kip)$ are Hilbert spaces. 
    Moreover, $\cK$ admits the decomposition
\begin{equation}
\label{Bitterfeld}
	\cK = \cH_+[\dot{+}]   \cH_-,
\end{equation}
as a direct sum and orthogonal with respect to $\kip$. This decomposition is called a \define{fundamental decomposition} of the Krein space $\cK$. Let $P_+$ and $P_-$ be projections associated with the direct sum $\cK = \cH_+ \dot{+} \cH_-$. The operator 
\begin{equation*}
	J := P_+ - P_-
\end{equation*}
is called a \define{fundamental symmetry} of the Krein space $\cK$.  Equipped with the inner product  
\begin{equation}
\label{Halle}
	\scalar{x}{y} = [Jx,y], \quad  x,y \in \cK,
\end{equation}
the space $\cK$ becomes a Hilbert space.  Throughout, the topological structure of the Krein space $\cK$ is given with respect to the topology induced by this Hilbert space inner product. For the general theory of Krein spaces and operators acting in them, we refer to the monographs \cite{AI, B74, G24}.

Let  $A$ be a linear operator in a Krein space $\bigl(\cK,\kip\bigr)$ with a dense domain $\mD(A)$. The {\em adjoint} of $A$ with respect to the inner product $\kip$ is denoted by $A^+$. The operator $A$ is called {\em symmetric} in $\bigl(\cK,\kip\bigr)$ if $A^+$ is an extension of $A$ and $A$ is called {\em selfadjoint} in $\bigl(\cK,\kip\bigr)$ if $A = A^+$.  The operator $A$ is called {\em non-negative}  ({\em positive}) in $\bigl(\cK,\kip\bigr)$ if $[Af,f]\ge 0$
(resp.\ $[Af,f]>0$)
for all $f\in\mD(A)$ (for all $f\in\mD(A)\setminus \{0\}$, respectively). Let $A$ be a selfadjoint  non-negative operator with a nonempty resolvent set $\rho(A)$. In this case, its spectrum is real (see~\cite{Ando} or \cite[Theorem~II.3.1]{La}). Additionally, the operator $JA$ is non-negative in the Hilbert space $\bigl(\cK,\scalar{\cdot}{\cdot}\bigr)$ and has a well-defined square root $(JA)^{\frac{1}{2}}$.

The decomposition of the underlying space $\cK$ in \eqref{Bitterfeld} is not unique, consequently neither is the inner product in \eqref{Halle}. However, any two Banach space norms such that $\kip$ is continuous are equivalent (see, e.g., \cite{La}). This naturally leads to the following question: When is a given selfadjoint operator $A=A^+$ in a Krein space $\bigl(\cK,\kip\bigr)$ selfadjoint with respect to one of the possible Hilbert space inner products defined by \eqref{Bitterfeld} and \eqref{Halle}? This question can be formulated more generally: When is a given selfadjoint operator $A=A^+$ in a Krein space $\bigl(\cK,\kip\bigr)$ similar to a selfadjoint operator in a Hilbert space? 
Recall that $A$ is similar to a densely defined operator $\AKone$ if 
\begin{equation*}
	 A=VBV^{-1}
\end{equation*}
for some bounded linear operator $V$ with a bounded everywhere-defined inverse. 
This is one of the big questions in operator theory in Krein spaces, with many significant positive results; for an overview, we mention \cite{F24}, see also \cite{CDT24}. For our purposes, it suffices to study non-negative operators $A$ with $0\in\rho(A)$. We recall a result from Branko {\'C}urgus \cite{Curgus1985} in the version presented in \cite[Theorem 2.1]{CN94}.

\begin{theorem}
\label{Goettingen}
Let $A$ be a non-negative operator in a Krein space $\bigl(\cK,\kip\bigr)$ with $0\in\rho(A)$. Then, the following two statements are equivalent.

\begin{enumerate}[label={\upshape(\roman*)}]
    \item\label{Goettingen:i}
    There exists a bounded operator $W$, positive with respect to the Krein space inner product $\kip$ with a bounded everywhere defined inverse such that
    \begin{equation*}
	W\mD\left((JA)^{\frac{1}{2}}\right)\subset \mD\left((JA)^{\frac{1}{2}}\right). 
    \end{equation*}
		
    \item\label{Goettingen:ii}
    The operator $A$ is similar to a selfadjoint operator in the Hilbert space $\bigl(\cK,\scalar{\cdot}{\cdot}\bigr)$, where $\scalar{\cdot}{\cdot}$ is given by \eqref{Halle}.
\end{enumerate}
If the spectrum of $A$ is discrete, the above statements \ref{Goettingen:i} and \ref{Goettingen:ii} are equivalent to the following statement.
\begin{enumerate}[label={\upshape(\roman*)}]
\setcounter{enumi}{2}
    \item\label{Goettingen:iii}
    There exists an Riesz basis in $\cK$ consisting of eigenfunctions of $A$.
\end{enumerate}
\end{theorem}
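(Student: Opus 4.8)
The plan is to translate the three conditions into statements about the \emph{spectral function} of $A$ and the location of its critical points, thereby reducing the theorem to a single core equivalence. Since $A=A^{+}$ is non-negative, it is definitizable with definitizing polynomial $p(\lambda)=\lambda$, so by the Langer spectral theory (see \cite{La}) it carries a spectral function $E(\cdot)$ defined on the Borel subsets of $\overline{\R}$ whose boundary avoids the finitely many critical points. Jordan chains of a definitizable operator can occur only at zeros of its definitizing polynomial; here the only zero is $0\in\rho(A)$, so $A$ has no Jordan chains, and likewise the only point that can be a critical point of $A$ is $\infty$. Hence the theorem reduces to showing that each of \ref{Goettingen:i}, \ref{Goettingen:ii} and---when $\sigma(A)$ is discrete---\ref{Goettingen:iii} is equivalent to the assertion that \emph{$\infty$ is not a singular critical point of $A$}; equivalently, $\sup\bigl\{\|E(\Delta)\|:\Delta\subset\overline{\R}\text{ is a finite union of intervals with }0\notin\partial\Delta\bigr\}<\infty$ in the norm of $\bigl(\cK,\scalar{\cdot}{\cdot}\bigr)$.

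First I would treat \ref{Goettingen:ii} $\Leftrightarrow$ regularity of $\infty$. If $A=VBV^{-1}$ with $B=B^{*}$ in $\bigl(\cK,\scalar{\cdot}{\cdot}\bigr)$, then $\sigma(B)=\sigma(A)\subset\R$, the Hilbert-space spectral projections $E_{B}(\Delta)$ of $B$ are orthogonal, and $E(\Delta)=VE_{B}(\Delta)V^{-1}$, whence $\|E(\Delta)\|\le\|V\|\,\|V^{-1}\|$ uniformly and $\infty$ is regular. Conversely, if $\infty$ is regular, then $P_{\pm}:=E\bigl((0,\pm\infty)\bigr)$ are bounded projections with $P_{+}+P_{-}=I$ (using $0\in\rho(A)$ together with regularity of $\infty$), the ranges $\cK_{\pm}:=\ran P_{\pm}$ are uniformly positive resp.\ uniformly negative, $\cK=\cK_{+}[\dot{+}]\cK_{-}$ is a fundamental decomposition, and $\wt J:=P_{+}-P_{-}$ is bounded and boundedly invertible. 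Since $\wt J$ commutes with $A$ and $A=A^{+}$, the equivalent Hilbert inner product $\scalar{x}{y}_{1}:=[\wt J x,y]$ turns $A$ into a selfadjoint operator, so $A$ is similar to a selfadjoint operator in $\bigl(\cK,\scalar{\cdot}{\cdot}\bigr)$.

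The main obstacle is the equivalence of \ref{Goettingen:i} with regularity of $\infty$. Put $G:=(JA)^{1/2}$, a positive selfadjoint operator in $\bigl(\cK,\scalar{\cdot}{\cdot}\bigr)$ with $0\in\rho(G)$, so that $\mD(G)$ becomes a Hilbert space under the energy norm $x\mapsto\|Gx\|$. Given a $W$ as in \ref{Goettingen:i}, the operator $GWG^{-1}$ is defined on all of $\cK$---because $G^{-1}$ and $W$ are bounded and $W\mD(G)\subset\mD(G)$---and closed, since $G$ is closed; hence it is bounded by the closed graph theorem. Using $A=JG^{2}$, boundedness of $GWG^{-1}$ together with the bounded invertibility and $[\cdot,\cdot]$-positivity of $W$ forces the spectral projections $E\bigl((\lambda,\infty)\bigr)$ and $E\bigl((-\infty,-\lambda)\bigr)$ to stay uniformly bounded as $\lambda\to\infty$, i.e.\ $\infty$ is a regular critical point. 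For the converse one has to construct such a $W$ from the fundamental symmetry $\wt J=P_{+}-P_{-}$ and from $J$: the construction must make $W$ both $[\cdot,\cdot]$-selfadjoint and $[\cdot,\cdot]$-positive and---the genuinely delicate part---must make $W$ map $\mD(G)$ into itself, which is possible precisely because regularity of $\infty$ forces $\wt J$ and $J$ to differ by an operator suitably subordinate to $G$. These estimates are exactly what is carried out in \cite{Curgus1985}, and I would follow that analysis.

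Finally, for the discrete-spectrum case I would prove \ref{Goettingen:ii} $\Leftrightarrow$ \ref{Goettingen:iii} directly. If $A=VBV^{-1}$ with $B=B^{*}$ and $\sigma(A)=\sigma(B)$ discrete, then $B$ admits an orthonormal basis of eigenvectors, and their $V$-images form a Riesz basis of eigenvectors of $A$. Conversely, if $(u_{k})$ is a Riesz basis of $\cK$ consisting of eigenvectors of $A$ with eigenvalues $\lambda_{k}$---which are real because $\sigma(A)\subset\R$, and bounded away from $0$ because $0\in\rho(A)$---choose a bounded, boundedly invertible $U$ carrying $(u_{k})$ to an orthonormal basis; then $(UAU^{-1})^{-1}=\sum_{k}\lambda_{k}^{-1}\scalar{\cdot}{Uu_{k}}\,Uu_{k}$ is bounded and selfadjoint, so $UAU^{-1}$ is selfadjoint and $A$ is similar to a selfadjoint operator. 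The absence of Jordan chains noted at the start is what makes \emph{eigenvectors}, rather than root vectors, the right notion in \ref{Goettingen:iii}.
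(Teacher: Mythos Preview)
The paper does not give its own proof of this theorem: it is stated as a known result, explicitly attributed to \'Curgus \cite{Curgus1985} in the version of \cite[Theorem~2.1]{CN94}, and then used as a black box in Section~\ref{subsection:similarity}. So there is no paper proof to compare against.

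Your sketch is a faithful outline of the argument in the cited references. The reduction to the single question ``is $\infty$ a singular critical point of the definitizable operator $A$?''\ is exactly the right framing, and your treatment of \ref{Goettingen:ii} $\Leftrightarrow$ regularity of $\infty$ and of \ref{Goettingen:ii} $\Leftrightarrow$ \ref{Goettingen:iii} in the discrete case is standard and correct. For \ref{Goettingen:i} $\Leftrightarrow$ regularity of $\infty$ you are right that this is the substantive part and that it is precisely the content of \cite{Curgus1985}; your closed-graph argument that $GWG^{-1}$ is bounded is the correct opening move there. The one place where your sketch is a bit hand-wavy is the sentence ``boundedness of $GWG^{-1}$ together with the bounded invertibility and $[\cdot,\cdot]$-positivity of $W$ forces the spectral projections \ldots\ to stay uniformly bounded'': this is true, but the mechanism---passing through the Veseli\'c/\'Curgus estimate linking the form domain, the metric operator $JW$, and the spectral projections near $\infty$---is not visible in what you wrote, and a reader would need to consult \cite{Curgus1985} to fill it in. Since you explicitly say you would follow that analysis, this is acceptable for a sketch, but be aware that this step is where the actual work lies.
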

The aforementioned Riesz basis is an orthonormal basis with respect to some Hilbert space inner product, which is equivalent to $\scalar{\cdot}{\cdot}$ in \eqref{Halle} \cite[Chapter VI \S 2]{GK}. 
\smallskip

We conclude this section by recalling some well-known facts from extension theory. 
Let $S$ be a closed densely defined symmetric operator in a Krein space $\bigl({\mathcal K},\kip \bigr)$.
As in the Hilbert space case, we define the  set of points of regular type of $S$ by, see \cite{AG},
\begin{equation*}
    \wh\rho(S) = \{ z \in \C : \text{exists } c_z  > 0 \text{ with } \| (S-z )x \| > c_z  \|x\| \text{ for all } x\in\mD(S)\}.
\end{equation*}

The \define{defect subspace} of the operator $S$ in a point of regular type $z$ is defined as follows
\begin{equation*}
\mathfrak{N}_z : = \ran(S - \overline{z})^{[\perp]}, \quad z \in\wh\rho(S).
\end{equation*}
The numbers $n_\pm(S):=\dim(\mathfrak{N}_z)$ are called the \define{defect numbers} 
of $S$ and they are constant for all $z\in\wh\rho(S)$ in the open upper (lower) half plane. 
Here $[\perp]$ stands for the orthogonal complement with respect to $\kip$.

In what follows we assume that the operator $S$ admits a selfadjoint extension $\wt {S}$ in $\bigl({\mathcal K},\kip\bigr)$ with a nonempty resolvent set $\rho(\wt {S})$. Then, for all $z\in\rho(\wt {S})$, we have the direct sum decomposition
\begin{equation}
   \label{eS*ds}
   \dom(S^{+}) = \dom (\wt {S}) \dotplus \mathfrak{N}_z.
\end{equation}
This implies that the dimension $\dim(\mathfrak{N}_z)$ is constant for all $z \in \rho(\wt {S})$ and consequently that $n_+(S)=n_-(S)$. In what follows, we consider only the case $n_+(S)=n_-(S)=1$.

\begin{definition}
\label{def:BTriple}
Let $\Gamma_0$ and $\Gamma_1$ be linear mappings from $\dom(S^+)$ to $\mathbb C$ such that
\begin{enumerate}[label={\upshape(\roman*)}]
   \item the mapping $\Gamma:\dom(S^+)\rightarrow\mathbb C^2$, 
   $f \mapsto
   \begin{pmatrix}
   	\Gamma_0f\\
   	\Gamma_1f
   \end{pmatrix}$,
   is surjective and
   \item the abstract Green's identity holds for all $f,~g\in \dom(S^+)$:
   \begin{equation}
   \label{Green}
   	\bigl[S^+f, g\bigr] - \bigl[f, S^+ g\bigr] = (\Gamma_1{f})\overline{{(\Gamma_0{g})}}\,-(\Gamma_0{f})\overline{{(\Gamma_1{g})}}.
   \end{equation}
\end{enumerate}
Then the triple $\bigl(\mathbb{C},\Gamma_0,\Gamma_1\bigr)$ is called a \define{boundary triple} for $S^+$, cf.\  \cite{D95,DM91,GG} for more generality.
\end{definition}

It follows from \eqref{Green} that the extensions $S_0$, $S_1$ of $S$ defined as restrictions of $S^+$ to the domains
\begin{equation*}
	\dom(S_0) :=  \ker(\Gamma_0)\quad \mbox{and} \quad \dom(S_1) :=\ker(\Gamma_1)
\end{equation*}
are selfadjoint extensions of $S$. Moreover, one can always choose  a boundary triple $\bigl(\mathbb{C},\Gamma_0,\Gamma_1\bigr)$ for $S$ such that $S_0=\wt S$, \cite[Proposition~2.2]{D99}. In this case  for every $z\in\rho(S_0)$ the decomposition~\eqref{eS*ds} holds with $\wt{S} = S_0$ and the mapping $\Gamma_0|_{\mathfrak{N}_z}:\mathfrak{N}_z\to\mathbb{C}$ is invertible for every $z\in\rho(S_0)$. 
This leads to the definition of the following functions.

\begin{definition}
\label{W00A}
The  functions $z\mapsto M(z)$ and $z\mapsto \gamma(z)$, defined as 
\begin{equation*} 
    M(z)\Gamma_0f_{z}=\Gamma_1f_{z}, \quad\text{and} \quad \gamma(z)=\left(\Gamma_0|_{\mathfrak{N}_z}\right)^{-1}, \quad f_z\in\mathfrak{N}_z,\,z\in\rho(S_0),
\end{equation*}
are called the \define{abstract Weyl function} of $S$ and the \define{$\gamma$-field} associated to the boundary triple $\bigl(\mathbb{C},\Gamma_0,\Gamma_1\bigr)$, respectively.
\end{definition}
The concept of the abstract Weyl function was introduced in~\cite{DM91} for symmetric operators in Hilbert spaces and later extended to symmetric operators in Krein spaces in~\cite{D95}.

\section{Operators on the star graph $G$} %%%{{{

Let $G$ be a star graph with $n$ edges, each of unit length and joined at $0$. An element $f$ from the space $L^2(G)$ in  \eqref{eq:hilbert_graph} is of the form $f = (f_1, f_2, \dots, f_n)$ where $f_j \in L^2(I_j)$. The inner product on $L^2(G)$ is induced by the usual one on the intervals, that is,
\begin{equation*}
   \scalar{f}{g} = \sum_{j=1}^n \int_0^1 f_j(t) \overline{g_j(t)}\, \rd t,\qquad f, g\in   L^2(G).
\end{equation*}
Let $n_+\in \{1, \dots, n-1\}$ and $n_- := n- n_+$. 
On $L^2(G)$ we define the indefinite inner product
\begin{equation}
   \label{eq:kreinspace.scalar_product}
   \kscalar{f}{g} := \scalar{f}{Jg} = \sum_{j=1}^{n_+} \int_0^1 f_j(t) \overline{g_j(t)}\, \rd t -  \sum_{j=n_+ +1}^{n} \int_0^1 f_j(t) \overline{g_j(t)}\, \rd t
\end{equation}
whose fundamental symmetry $J: L^2(G) \to L^2(G)$ is given by
\begin{equation*}
   (Jf)_j := 
   \begin{cases}
      f_j,\quad & 1\le j \le n_+, \\[1ex]
      -f_j,\quad & n_+ + 1 \le j \le n,
   \end{cases}
\end{equation*}
which clearly satisfies $J^* = J^{-1} = J$ for $f,g \in L^2(G)$. The inner product in \eqref{eq:kreinspace.scalar_product} endows $(L^2(G),\kip)$ with a Krein space structure, see Section~\ref{sec:interval}.

Recall the definition of $\widehat H^1_D(G)$ in \eqref{eq:hilbert_graph.dirichlet}. 

In order to study the spectral properties of the operator $B$ in \eqref{DerAuserwaehlte} and \eqref{ElDominio}
we define auxiliary operators 
\begin{equation*}
   \SK \subseteq \AK \subseteq \TK
\end{equation*}
with domains 
\begin{align}
\label{eq:definition_minimal_operator}
   \mD(\TK ) &= \Big\{ f\in \widehat H^1_D(G) \ :\ f_1 = \dots = f_n(0),\,  
   f_j\in H^2(I_j) \text{ for all }  j\ \Big\},
   \\
   \mD(\AK) &= \{ f\in \mD(\TK) \ :\  f_j(0) = 0\ \text{ for all } j \},
   \\
   \mD(\SK ) &= \Big\{ f\in \mD(\TK) \ :\ f_j(0) = 0\  \text{ for all }  j\ \text{and}\ \sum_{j=1}^n f_j'(0) = 0 \Big\}
\end{align}
as
\begin{align*}
  \TK f & = (-f_1'',\, \dots, \, -f_{n_+}'',\,  f_{n_+ + 1}'',\, \dots, \,  f_n'')
\end{align*}
and
\begin{equation*}
    Af = Tf,\quad
    Sg = Tg,\qquad
    \text{for}\quad f\in\mD(A),\ g\in\mD(S).
\end{equation*}
For all three operators, the functions in their domains satisfy a Dirichlet boundary condition at the outer vertices of the graph and they are continuous at the central vertex. 
However, the additional condition at the central vertex is different for each of the operators.

Note that the functions in $\mD(\TK)$ are continuous on $G$, twice differentiable in the interior of the edges $I_j$ while their derivatives may be discontinuous in the central vertex $0$. 
The functions in the domain of $\AK$ are continuous on $G$ and they satisfy a Dirichlet condition in $0$.
Therefore, $A$ can be viewed as the direct sum of operators defined on the individual edges $I_j$, see Remark~\ref{rem:ST}.
Finally, the functions in the domain of $\SK$ satisfy in addition a Kirchhoff boundary condition at $0$.
\smallskip

For each $j= 1, \dots, n$, we set $\Aj_0$ to be the negative second derivative in the Hilbert space $L^2(I_j)$ on the edge $I_j$ with Dirichlet boundary conditions at the vertices $0$ and at $1$,
\begin{equation}
\label{eq:An}
	\DiffInt g = -g'',\quad \mD(\DiffInt) = \{ g\in H^2((0, 1)) : g(0) = g(1) = 0 \}.
\end{equation}
Then, $\DiffInt$ is selfadjoint and it has purely discrete spectrum
\begin{equation}
   \label{eq:Tn:spectrum}
   \sigma(\DiffInt) = \sigma_p(\DiffInt) = \{ (k\pi)^2 : k\in\N \}.
\end{equation}
All eigenvalues are simple and the corresponding eigenfunctions are
\begin{equation}
   \label{eq:uk}
   \eigf{k}(x) = \sin(k\pi x),\qquad x\in [0, 1].
\end{equation}

We collect the properties of $A$ in the following remark.
\begin{remark}
\label{rem:ST}

\begin{enumerate}[label={(\roman*)}]
\addtolength{\itemsep}{-.2\baselineskip}
   \item
   \label{item:ST:i} 
   The operator $\SK$ is a closed symmetric operator in the Krein space $(L^2(G), \kip)$ and $\SK^+ = \TK$.
   
   \item\label{item:ST:ii} 
   We have $\SK \subseteq \AK \subseteq \TK = \SK^+$.
   
   \item\label{item:ST:iii} 
   The operator $\AK$ is selfadjoint in the Krein space $(L^2(G), \kip)$ and
   \begin{align*}
   	\AK = 
   	\Big( \bigoplus_{j=1}^{n_+} \Aj_0 \Big) \oplus
   	\Big( \bigoplus_{j=n_+ + 1}^n -\Aj_0 \Big).
   \end{align*}
   Moreover, $A$ is a positive operator in $(L^2(G), \kip)$.
   Indeed, Poincar\'e's inequality shows that for all $f\in\mD(A)\setminus \{0\}$
   $$
    [Af,f]=   \scalar{JAf}{f} = \scalar{	\Big( \bigoplus_{j=1}^{n} \Aj_0 \Big) f}{f} 
      = \sum_{j=1}^n  \| f'_j \|_{L^2(I_j)}^2  
     = \| f' \|_{L^2(G)}^2 
      \ge \| f \|_{L^2(G)}^2. 
   $$
   
   \item\label{item:ST:iv} 
   The spectrum of $\AK$ consists of point spectrum only and 
   \begin{equation}
   \label{eq;pointspecA}
   	\sigma(\AK) = \pointspec(\AK) = \{ (k\pi)^2  : k\in\N \} \cup \{ -(k\pi)^2  : k\in\N \}.
   \end{equation}
   The eigenspace associated to each positive eigenvalue has dimension $n_+$ and the eigenspace associated to each negative eigenvalue has dimension $n_-$. In particular, $0 \in \rho(A)$.
\end{enumerate}
\end{remark}

Since any function $f\in \mD(\TK)$ is continuous at zero, the components $f_j$ satisfy
\begin{equation*}
   f_1(0) = \dots = f_n(0).
\end{equation*}
For convenience we write $f(0)$ for this common value.

\begin{proposition}
   \label{prop:boundarytriplet}
      The mappings
      \begin{alignat*}{3}
      \Gamma_0:~ &\mD(\TK) \to \C,\qquad &&\Gamma_0 f = f(0), \\
      \Gamma_1:~ &\mD(\TK) \to \C,\qquad &&\Gamma_1 f = \sum_{j=1}^n f_j'(0), 
   \end{alignat*}
   form a boundary triple for $\TK$.
\end{proposition}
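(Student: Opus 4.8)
The plan is to check directly the two defining properties of a boundary triple from Definition~\ref{def:BTriple}, using that $\SK^+=\TK$ by Remark~\ref{rem:ST}\ref{item:ST:i}. The boundary space is $\C$, which is consistent with $n_+(\SK)=n_-(\SK)=1$; this holds because $\SK\subseteq\AK\subseteq\SK^+$ with $\AK$ selfadjoint in the Krein space (Remark~\ref{rem:ST}\ref{item:ST:iii}) and $\dim(\mD(\AK)/\mD(\SK))=1$. So it remains to establish the abstract Green's identity \eqref{Green} for $\TK$ and the surjectivity of $\Gamma=(\Gamma_0,\Gamma_1)^\top:\mD(\TK)\to\C^2$.

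For Green's identity, fix $f,g\in\mD(\TK)$. The key point is that the sign change built into $\TK$ in \eqref{DerAuserwaehlte} exactly compensates the indefinite weight $J$ in \eqref{eq:kreinspace.scalar_product}, so that
\begin{equation*}
   \kscalar{\TK f}{g}=\scalar{\TK f}{Jg}=-\sum_{j=1}^{n}\int_0^1 f_j''\,\overline{g_j}\,\rd x,
   \qquad
   \kscalar{f}{\TK g}=-\sum_{j=1}^{n}\int_0^1 f_j\,\overline{g_j''}\,\rd x .
\end{equation*}
Since $f_j,g_j\in H^2((0,1))$, integrating by parts on each edge gives
\begin{equation*}
   \kscalar{\TK f}{g}-\kscalar{f}{\TK g}
   =-\sum_{j=1}^{n}\Bigl[\,f_j'(x)\,\overline{g_j(x)}-f_j(x)\,\overline{g_j'(x)}\,\Bigr]_{x=0}^{x=1}.
\end{equation*}
The contributions at $x=1$ vanish because of the Dirichlet condition at the outer vertices (cf.\ \eqref{eq:hilbert_graph.dirichlet}), and since every function in $\mD(\TK)$ is continuous at the central vertex (with common values $f(0)$ and $g(0)$), the terms at $x=0$ rearrange to $\bigl(\sum_{j=1}^{n}f_j'(0)\bigr)\overline{g(0)}-f(0)\,\overline{\bigl(\sum_{j=1}^{n}g_j'(0)\bigr)}$, which is precisely $(\Gamma_1 f)\overline{\Gamma_0 g}-(\Gamma_0 f)\overline{\Gamma_1 g}$.

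For surjectivity it suffices, given $(a,b)\in\C^2$, to exhibit one $f\in\mD(\TK)$ with $\Gamma_0 f=a$ and $\Gamma_1 f=b$; take $f_1(x):=a(1-x^2)+b\,x(1-x)$ and $f_j(x):=a(1-x^2)$ for $2\le j\le n$. Then each $f_j\in H^2((0,1))$ vanishes at $1$, $f_j(0)=a$ for all $j$, and $\sum_{j=1}^{n}f_j'(0)=b$, hence $f\in\mD(\TK)$ with the required boundary data and $\Gamma$ is onto $\C^2$. All of this is routine; the only genuine subtlety is keeping track of the two independent sign changes — the fundamental symmetry $J$ and the sign of the second-derivative term in \eqref{DerAuserwaehlte} — which, as observed above, conspire so that $\kscalar{\TK\cdot}{\cdot}$ is the sesquilinear form of the ordinary sign-definite Laplacian and no indefiniteness survives in \eqref{Green}. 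As a consistency check one reads off $\ker\Gamma_0=\mD(\AK)$ and $\ker\Gamma_1=\mD(\AKone)$, so that $\AK$ and $\AKone$ are exactly the selfadjoint extensions $S_0$ and $S_1$ associated with this boundary triple, the latter being the indefinite Kirchhoff Laplacian under study.
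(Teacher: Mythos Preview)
Your proof is correct and follows essentially the same route as the paper: integrate by parts on each edge, use the Dirichlet conditions at $x=1$ and the continuity at $x=0$ to collapse the boundary terms to $(\Gamma_1 f)\overline{\Gamma_0 g}-(\Gamma_0 f)\overline{\Gamma_1 g}$. The paper's own proof is terser and in fact omits the surjectivity verification altogether; your explicit construction fills that gap, and the additional remarks on the sign cancellation and on $\ker\Gamma_0=\mD(\AK)$, $\ker\Gamma_1=\mD(\AKone)$ are correct.
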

\begin{proof}
   By definition, for $f,g\in \mD(\TK)$,  integration by parts, Dirichlet conditions at the outer vertices,  and the continuity of $f$ and $g$ at $0$ yields
   \begin{align*}
      \kscalar{\TK f}{g} - \kscalar{f}{\TK g} 
      &= \sum_{j=1}^{n} \scalar{-f_j''}{g_j}_{L^2(I_j)} - \scalar{f_j}{-g_j''}_{L^2(I_j)} 
      = \overline g(0) \sum_{j=1}^{n} f_j'(0) - f(0) \sum_{j=1}^{n} \overline g_j'(0)
      \\
      &= (\Gamma_1 f) (\overline{ \Gamma_0 g)} - (\Gamma_0 f) (\overline{ \Gamma_1 g}),
   \end{align*}
   hence $\bigl(\mathbb{C},\Gamma_0,\Gamma_1\bigr)$  is a boundary triple for $T$. 
\end{proof}

Observe that 
\begin{align}
   \mD(\AK) &= \ker \Gamma_0 = \{ f\in \mD(\TK) : \Gamma_0 f = 0 \},
   \\
   \mD(\AKone) &= \ker \Gamma_1 = \{ f\in \mD(\TK) : \Gamma_1 f = 0 \}
\end{align}
where $B$ is the operator from \eqref{DerAuserwaehlte}, \eqref{ElDominio}.
The next proposition describes the relation between the operator $B$ and the auxiliary operator $A$. 
This is the main result of this section.
\begin{proposition}
   The operator $B$ in \eqref{DerAuserwaehlte} is selfadjoint in the Krein space $(L^2(G), \kip)$. 
   That is,
   \begin{equation*}
      B=B^+.
   \end{equation*}
	For $\lambda\in\rho(A)\cap\rho(B)$, the following equality holds
    \begin{equation}
    \label{KREIN}
    	(A - \lambda)^{-1} = (B - \lambda)^{-1} - \gamma(\lambda)M(\lambda)^{-1}\gamma(\overline \lambda)^+,
    \end{equation}
    where $M$ is the Weyl-function and $\gamma$ is the $\gamma$-field, given by
    \begin{align}
    \label{eq:gammafield}
        \gammaK(\lambda)c &= 
   	\begin{cases}
   		- \frac{c \sin(\mu x - \mu)}{\sin\mu}\quad &\text{if }\ 1 \le j \le n_+,\\[1ex]
   		- \frac{c \sin(\I\mu x - \I\mu)}{\sin(\I\mu)} &\text{if }\ n_+ + 1 \le j \le n,
   	\end{cases}
    \\[1ex]
    \label{eq:Weylfunction}
    	\MK(\lambda) &= 
    	\begin{cases}
    		-\mu ( n_+\cot\mu + n_-\coth \mu ), \quad & \text{if }\ \lambda \neq 0,\\
    		- n,\quad & \text{if }\ \lambda = 0,
    	\end{cases}
    \end{align}
    for $\lambda\in\rho(\AK)$, $c\in \mathbb C$ and $\mu\in\C$ such that $\mu^2 = \lambda$.
\end{proposition}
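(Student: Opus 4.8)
The plan is to read off everything from the boundary triple $(\C,\Gamma_0,\Gamma_1)$ of Proposition~\ref{prop:boundarytriplet}. First, the self-adjointness of $\AKone$ costs nothing: since $(\C,\Gamma_0,\Gamma_1)$ is a boundary triple for $\TK=\SK^+$ and $\mD(\AKone)=\ker\Gamma_1$, the discussion following Definition~\ref{def:BTriple} immediately yields that $\AKone$ is a selfadjoint extension of $\SK$ in the Krein space $(L^2(G),\kip)$, i.e.\ $\AKone=\AKone^+$. What remains is to compute $\gammaK$ and $\MK$ explicitly and to establish the resolvent identity \eqref{KREIN}.

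For the $\gamma$-field I would work straight from Definition~\ref{W00A}. Fix $\lambda\in\rho(\AK)$, write $\mu^2=\lambda$, and fix $c\in\C$; the task is to exhibit the unique $u\in\mathfrak{N}_\lambda=\ker(\TK-\lambda)$ with $\Gamma_0u=u(0)=c$, and then $\gammaK(\lambda)c=u$. On an edge $1\le j\le n_+$ the membership $u\in\ker(\TK-\lambda)$ means $-u_j''=\lambda u_j$ with $u_j(1)=0$, which forces $u_j$ to be a scalar multiple of $x\mapsto\sin(\mu x-\mu)$; on an edge $n_++1\le j\le n$ it means $u_j''=\lambda u_j$ with $u_j(1)=0$, which (since $(\I\mu)^2=-\mu^2=-\lambda$) forces $u_j$ to be a scalar multiple of $x\mapsto\sin(\I\mu x-\I\mu)$. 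Because $\lambda\notin\sigma(\AK)=\{(k\pi)^2:k\in\N\}\cup\{-(k\pi)^2:k\in\N\}$ by Remark~\ref{rem:ST}\ref{item:ST:iv}, one has $\sin\mu\neq0$ and $\sin(\I\mu)\neq0$, so the normalisation $u_j(0)=c$ is admissible and produces precisely the two branches of \eqref{eq:gammafield}; the resulting vector $u$ is continuous at the central vertex with value $c$, lies in $\mD(\TK)$, and spans the one-dimensional space $\mathfrak{N}_\lambda$, so $\gammaK(\lambda)c=u$.

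The Weyl function then follows by differentiating: $\MK(\lambda)c=\Gamma_1\gammaK(\lambda)c=\sum_{j=1}^n u_j'(0)$ with $u=\gammaK(\lambda)c$. A short computation gives $u_j'(0)=-c\mu\cot\mu$ for $1\le j\le n_+$ and $u_j'(0)=-c\,\I\mu\cot(\I\mu)=-c\mu\coth\mu$ for $n_++1\le j\le n$ (using $\I\cot(\I\mu)=\coth\mu$); summing over the $n_+$, resp.\ $n_-$, edges yields $\MK(\lambda)=-\mu(n_+\cot\mu+n_-\coth\mu)$ for $\lambda\neq0$. The value $\lambda=0$ is treated separately: then each $u_j$ is affine with $u_j(1)=0$, hence $u_j(x)=c(1-x)$ on every edge (the sign in front of $u_j''$ being irrelevant), so $u_j'(0)=-c$ and $\MK(0)=-n$. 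This establishes \eqref{eq:Weylfunction}.

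Finally, \eqref{KREIN} is the Krein-type resolvent formula for the pair of selfadjoint extensions $\AK=\ker\Gamma_0$ and $\AKone=\ker\Gamma_1$. I would either quote it from the boundary-triple formalism, which is available in Krein spaces (see \cite{DM91,D95}), or verify it directly: for $\lambda\in\rho(\AK)\cap\rho(\AKone)$ and $f\in L^2(G)$, the difference $(\AKone-\lambda)^{-1}f-(\AK-\lambda)^{-1}f$ belongs to $\mathfrak{N}_\lambda$ and hence equals $\gammaK(\lambda)c$ for some scalar $c$; applying $\Gamma_1$ and using $\Gamma_1\gammaK(\lambda)=\MK(\lambda)$ together with the identity $\Gamma_1(\AK-\lambda)^{-1}=\gammaK(\overline\lambda)^+$ — itself obtained from Green's identity \eqref{Green} tested against an element of $\mathfrak{N}_{\overline\lambda}$ — one solves for $c$ and arrives at \eqref{KREIN}. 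The only genuinely computational ingredient is the explicit solution of the two-point problem on each edge; the points to keep an eye on are the non-vanishing of $\sin\mu$ and $\sin(\I\mu)$ on $\rho(\AK)$ and the (standard) invertibility of the scalar $\MK(\lambda)$ for $\lambda\in\rho(\AK)\cap\rho(\AKone)$, and I do not anticipate a real obstacle beyond careful bookkeeping with the trigonometric and hyperbolic identities.
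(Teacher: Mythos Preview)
Your proposal is correct and follows essentially the same route as the paper: the selfadjointness of $\AKone$ and the Krein resolvent formula \eqref{KREIN} are obtained from the boundary-triple formalism (the paper cites \cite{D95}), and $\gammaK$ and $\MK$ are computed by explicitly solving $(\TK-\lambda)f=0$ on each edge under the normalisation $\Gamma_0 f=c$. Your write-up is in fact slightly more detailed than the paper's own proof (you spell out why $\sin\mu$ and $\sin(\I\mu)$ do not vanish, give the $\lambda=0$ case explicitly, and sketch a direct verification of \eqref{KREIN}), but there is no substantive difference in approach.
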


\begin{proof}
   The selfadjointness of the operator $B$ and the formula~\eqref{KREIN} follow from \cite{D95}, see also Section~\ref{sec:interval}.
   In order to show \eqref{eq:Weylfunction} we fix $c\in\C$. Let us first assume that $\lambda\neq 0$. Then a straightforward calculation shows that $(\TK - \lambda)f = 0$ with $f = (f_j)_{j=1}^n$ and $\Gamma_0f = c$ if and only if 
   \begin{align*}
   	f_j(x) = 
   	\begin{cases}
   		- \frac{c \sin(\mu x - \mu)}{\sin\mu}\quad &\text{if }\ 1 \le j \le n_+,\\[1ex]
   		- \frac{c \sin(\I\mu x - \I\mu)}{\sin(\I\mu)} &\text{if }\ n_+ + 1 \le j \le n.
   	\end{cases}
   \end{align*}
   We obtain easily 
   \begin{equation*}
      \gammaK(\lambda) c =
      (\Gamma_0|_{\ker(T - \lambda)})^{-1} c = (f_j)_{j=1}^n.
   \end{equation*}
   Consequently,
   \begin{align*}
   	\MK(\lambda)c = \Gamma_1 (\Gamma_0|_{\ker(T-\lambda)})^{-1} c & = \sum_{j=1}^n f_j'(0) = \sum_{j=1}^{n_+} f_j'(0)+ \sum_{j = n_+ + 1}^n f_j'(0)\\
   	& = -\left( n_+ \frac{ \mu\cos\mu}{\sin\mu} + n_- \frac{ \I\mu \cos(\I\mu)}{\sin(\I\mu)}\right) c.
   \end{align*}
   The case $\lambda = 0$ is treated analogously.
\end{proof}

\section{Spectral properties of the indefinite Kirchhoff Laplacian %\texorpdfstring{$\AKone$}{B}
} \label{spectrum}

We now establish an interlacing property for the eigenvalues of $A$ and $B$. 
The spectrum of $A$ is pure point spectrum.
Since $B$ is a one-dimensional perturbation of $A$, see \eqref{KREIN},
the dimensions of their eigenspaces for the same $\lambda$ differ by at most $1$.
Indeed, we will show that 
$\dim(\ker(B-\lambda) = \dim(\ker(A-\lambda) - 1$ for every$\lambda\in \pointspec(A)$ and that between any two subsequent eigenvalues of $A$ there is exactly one eigenvalue of $B$ and this eigenvalue has multiplicity $1$.

\subsection{The spectrum of the operator $B$}
%Now we prove an interlacing property for the eigenvalues of $\AK$ and $\AKone$.
\begin{theorem}
\label{thm:spectrumB}
   The operator $B$ is a positive operator in the Krein space $(L^2(G), \kip)$, $0\in \rho(B)$, the spectrum of $\AKone$ is purely discrete, real and 
   \begin{align*}
   	\sigma(\AKone) &=  \sigma_p(\AKone) =
        \begin{cases}
        \{ -(k\pi )^2: k\in\N \}\cup \{ \eta_{k} : k\in\mathbb Z \setminus\{0\} \},
        &\text{if }\ n_+=1,
        \\[1ex]
        \{ (k\pi )^2: k\in\N \}\cup \{ \eta_{k} : k\in\mathbb Z \setminus\{0\} \},
        &\text{if }\ n_-=1,
        \\[1ex]
        \{ \sign(k)(k\pi )^2: k\in\Z\setminus\{0\} \}\cup \{ \eta_{k} : k\in\mathbb Z \setminus\{0\} \},
        &\text{if }\ n_+\ge 2, n_-\ge 2
        \end{cases}
   \end{align*}
   where $\eta_k$ is the unique solution of 
   \begin{enumerate}[label={\upshape(\roman*)}]
   	\item $\coth(\sqrt{\eta_k}) = - \frac{n_+}{n_-}\cot(\sqrt{\eta_k})$ in the interval $( ((k-1)\pi)^2, (k\pi)^2 )$ if $k\ge 1$,
   	\item $\coth(\sqrt{-\eta_k}) = - \frac{n_-}{n_+}\cot(\sqrt{-\eta_k})$ in the interval $( -(k\pi)^2, -((k+1)\pi)^2 )$ if $k\le -1$.
   \end{enumerate}
   In particular, 
   \begin{equation*}
      -\pi^2 < \eta_{-1} <  0 < \eta_1 < \pi^2
   \end{equation*}
   and
   \begin{alignat*}{3}
      -(k\pi)^2 &< \eta_k < -((k+1)\pi)^2 
      && \quad\text{for } k\le -2\\
      ((k-1)\pi)^2 &< \eta_k < (k\pi)^2 
      && \quad\text{for } k\ge 2.
   \end{alignat*}
   The multiplicity of $(k\pi)^2$ as an eigenvalue of $\AKone$ is $n_{+} - 1$, while the multiplicity of $-(k\pi)^2$ is $n_{-} - 1$. Each $\eta_k$ has multiplicity $1$. Moreover, the eigenvalues $\eta_k$ satisfy the asymptotic behavior
   
   \begin{equation*}
   	\eta_{k} \approx 
   	\begin{cases}
   		\left( k\pi - \arctan \frac{n_-}{n_+} \right)^2 \quad & \text{for }\ k\to\infty,\\[1ex]
   		-\left( k\pi - \arctan \frac{n_+}{n_-} \right)^2 \quad
   		&\text{for }\ k\to -\infty.
   	\end{cases}
   \end{equation*}
   Therefore, the spectrum of $B$ is symmetric with respect to the origin if and only if
   $$
   n_+=n_-.
   $$
\end{theorem}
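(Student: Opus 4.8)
The plan is to exploit the boundary-triple machinery set up in the previous section: the eigenvalues of $B=S_1$ are precisely the zeros of the Weyl function $M(\lambda)$ (together with those eigenvalues of $A$ whose eigenspaces are not entirely annihilated by $\Gamma_1$), while the positive eigenvalues $(k\pi)^2$ and negative eigenvalues $-(k\pi)^2$ of $A$ survive as eigenvalues of $B$ with multiplicity reduced by one, coming from eigenfunctions of $A$ supported on the $n_+$ (resp. $n_-$) like-signed edges that additionally satisfy the Kirchhoff condition. So the first step is to split $\sigma(B)$ into these two families. For the second family one observes that on a positive edge an eigenfunction $\sin(k\pi x)$ of $\Aj_0$ has derivative $k\pi\cos(k\pi)\neq 0$ at $0$, so out of the $n_+$-dimensional eigenspace of $A$ at $(k\pi)^2$ exactly the $(n_+-1)$-dimensional subspace of vectors whose edge-coefficients sum to zero lies in $\ker\Gamma_1=\mD(B)$; this yields multiplicity $n_+-1$ (and analogously $n_--1$ for $-(k\pi)^2$), which also explains why in the cases $n_+=1$ or $n_-=1$ the corresponding family drops out entirely.

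Next I would analyze the zeros of $M$. Writing $\mu=\sqrt\lambda$ with the convention $\mu>0$ for $\lambda>0$ and $\mu=\I\sqrt{-\lambda}$ for $\lambda<0$, the relation $M(\lambda)=-\mu(n_+\cot\mu+n_-\coth\mu)=0$ becomes, for $\lambda>0$, the equation $n_+\cot\mu+n_-\coth\mu=0$, i.e. $\coth\mu=-\tfrac{n_+}{n_-}\cot\mu$, and for $\lambda<0$ (swap roles of $\cot$ and $\coth$, using $\coth(\I t)=-\I\cot t$ etc.) the stated equation $\coth\sqrt{-\eta}=-\tfrac{n_-}{n_+}\cot\sqrt{-\eta}$. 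The interlacing claim then follows from a monotonicity/IVT argument on each interval between consecutive poles of $\cot\mu$, namely on $((k-1)\pi,k\pi)$: on such an interval $\cot$ decreases from $+\infty$ to $-\infty$ while $\coth$ is smooth and positive, so the function $\mu\mapsto n_+\cot\mu+n_-\coth\mu$ is strictly decreasing from $+\infty$ to $-\infty$ and has exactly one zero $\mu_k$, giving a unique $\eta_k=\mu_k^2\in(((k-1)\pi)^2,(k\pi)^2)$. The small-$k$ refinement $0<\eta_1<\pi^2$ and $-\pi^2<\eta_{-1}<0$ (and the value $M(0)=-n\neq 0$ showing $0\in\rho(B)$, hence $0$ is not among the $\eta_k$) come from the same computation on $(0,\pi)$ together with the sign of $n_+\cot\mu+n_-\coth\mu$ near $0$. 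Positivity of $B$ in the Krein sense and discreteness/reality of $\sigma(B)$ follow from Remark~\ref{rem:ST}(iii)–(iv), the Krein-resolvent formula \eqref{KREIN} which makes $(B-\lambda)^{-1}$ a finite-rank perturbation of the compact $(A-\lambda)^{-1}$, and the cited results on non-negative selfadjoint operators in Krein spaces.

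For the asymptotics, since $\coth\mu\to 1$ as $\mu\to+\infty$, the equation $n_+\cot\mu_k+n_-\coth\mu_k=0$ forces $\cot\mu_k\to -n_-/n_+$, hence $\mu_k\approx k\pi-\arctan(n_-/n_+)$ and $\eta_k\approx(k\pi-\arctan(n_-/n_+))^2$; the negative branch is symmetric with $n_+$ and $n_-$ interchanged because there the coefficient of the $\coth$-term (which again tends to $1$) multiplies $n_+$. Finally, the symmetry statement: if $n_+=n_-$ then $B$ anticommutes with the natural involution that swaps the positive and negative edges and multiplies by $-1$, so $\lambda\in\sigma(B)\iff-\lambda\in\sigma(B)$; conversely, if $n_+\neq n_-$ the leading eigenvalue asymptotics just derived give $\eta_k\sim(k\pi)^2$ and $\eta_{-k}\sim-(k\pi)^2$ with shifts $\arctan(n_-/n_+)$ and $\arctan(n_+/n_-)$ which are unequal (their sum is $\pi/2$ but each differs from $\pi/4$), so $\eta_k\neq-\eta_{-k}$ for large $k$ and the spectrum cannot be symmetric. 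I expect the main obstacle to be the bookkeeping in the three case distinctions ($n_+=1$, $n_-=1$, both $\geq 2$) — in particular making sure that when $n_+=1$ the ``extra'' eigenvalue $\mu_1^2$ produced by the first interval $(0,\pi)$ is correctly identified as $\eta_1$ rather than as a reduced-multiplicity copy of $\pi^2$, and tracking which endpoints $((k-1)\pi)^2,(k\pi)^2$ are genuinely excluded as zeros of $M$ versus being poles; the analytic heart (monotonicity of $n_+\cot+n_-\coth$ on each fundamental interval) is routine.
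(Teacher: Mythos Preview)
Your approach is correct and follows the same overall line as the paper: locate $\sigma(B)$ inside $\sigma(A)\cup\{M(\lambda)=0\}$ via the Krein resolvent formula, analyze the zeros of $M$ by the monotonicity of $\cot$ against $\coth$ on each interval $((k-1)\pi,k\pi)$, and read off the asymptotics from $\coth\mu\to1$.

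There is one genuine local difference and one slip worth flagging. For the multiplicities at $\pm(k\pi)^2$ you compute the eigenspace of $B$ directly, arguing that any eigenfunction must vanish on the opposite-sign edges and that the Kirchhoff condition cuts one dimension from the remaining $n_\pm$-dimensional space; this is correct and elementary (and, since $B$ is positive with $0\in\rho(B)$, eigenvalues are semisimple so geometric and algebraic multiplicities agree). The paper instead invokes a general interlacing inequality $m_A(I)-1\le m_B(I)\le m_A(I)+1$ for rank-one perturbations and plays it off against well-chosen intervals; that argument is less explicit but needs no solution of the ODE on the negative edges. The slip: you justify positivity of $B$ by citing Remark~\ref{rem:ST}(iii)--(iv), but that remark concerns $A$, not $B$. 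You need the separate integration-by-parts computation $[Bf,f]=\sum_j\|f_j'\|^2+\overline{f(0)}\sum_j f_j'(0)=\|f'\|^2$ (the Kirchhoff condition kills the boundary term), after which Poincar\'e gives uniform positivity and $0\in\rho(B)$; this is exactly what the paper does.
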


\begin{proof}
   The Weyl function $M$ (cf.\ Section~\ref{sec:interval}) contains information about the spectrum of $\AKone$.
   The spectrum of $\AKone$ consists of the zeros of $M$ and possibly a subset of the spectrum of $\AK$.
    This follows from \eqref{KREIN}: For $\lambda \in \rho(A)$ the left hand side of \eqref{KREIN} is bounded, hence, $\lambda \in \sigma(B)$ if and only if $M(\lambda)=0$ 
   (see, e.g., \cite[Theorem 2.1]{D95}), 
   \begin{equation}
      \sigma(\AKone)\subseteq \sigma(\AK) \cup \{ \lambda\in\C : \MK(\lambda) = 0 \}.
   \end{equation}
   For $f\in\mD(B)$ we have
   \begin{align}
   \label{Bananenwurst}
      \scalar{J\AKone f}{f}
      % &= \sum_{j=1}^n \int_0^1 -f_j'' \overline f_j\, dx
      & = \sum_{j=1}^n \int_0^1 f_j' \overline{f_j'}\, dx + f_j' \overline f_j\Big|_0^1 = \| f' \|_{L^2(G)}^2 +  \overline f_1(0)\, \sum_{j=1}^n f_j'(0) = \| f' \|_{L^2(G)}^2 
      \ge 
      \| f \|_{L^2(G)}^2 
      >0
   \end{align}
   where in the last step we used Poincar\'e's inequality.
   Therefore $JB$ is an uniformly positive operator in the Hilbert space $(L^2(G),\scalar{\cdot}{\cdot})$ and $0\in\rho(JB)$. As $J=J^{-1}$, we obtain  $0\in\rho(B)$.
   Hence the operator $B$ is a positive operator in the Krein space $(L^2(G), \kip)$, implying 
   (see Section \ref{sec:interval})
   \begin{equation*}
   	\sigma(\AKone)\subseteq \R.
   \end{equation*}
   It follows from~\eqref{eq:Weylfunction} that $\MK(0)\neq0$. In order to find all positive zeros of $\MK$, we substitute $\mu = \sqrt{\lambda} > 0$ in \eqref{eq:Weylfunction}. Then
   \begin{equation*}
   	0 = \MK(\lambda) = -\mu (n_+ \cot\mu + n_- \coth \mu).
   \end{equation*}
   Equivalently,
   \begin{equation}
   \label{eq:eigenvalue_positive}
       \frac{n_-}{n_+} \coth \mu =	- \cot\mu.
   \end{equation}
   Note that $\coth$ is strictly decreasing from $\infty$ to $1$ in the interval $(0, \infty)$. 
   For each $k\ge 1$, $-\cot\mu$ is continuous and strictly increasing from $-\infty$ to $\infty$ on the interval $( (k-1)\pi,\, k\pi)$. 
   Therefore, for any $k\ge 1$, the equation \eqref{eq:eigenvalue_positive} has a unique solution $\mu_k$ in the interval $((k-1)\pi,\, k\pi)$, see Figure~\ref{fig:neweigenvalue}.

   To see the asymptotic behaviour, recall that 
   $\tanh \mu\to 1$ for $\mu \to \infty$. 
   Hence, $\mu$ is asymptotically the unique solution of $\tan \mu = -\frac{n_+}{n_-}$ as $k\rightarrow\infty$ in the interval $((k-1)\pi,\, k\pi)$ as $k\to\infty$.
   % Therefore, there is a unique $\ell = \ell(k)$ such that $\mu_k\approx\ell\pi - \arctan\frac{n_+}{n_-}\in( (k-1)\pi,\, k\pi)$. Since $\arctan\frac{n_+}{n_-}\in(0,\frac\pi2)$, we see that $\ell=k$.

   For finding the negative zeros of $\MK$, we substitute $\mu = \I\sqrt{|\lambda|}$ in \eqref{eq:Weylfunction}. This gives
   \begin{equation*}
   	0 = \MK(\lambda) = -\mu (n_+ \cot(\I |\mu| ) + n_- \coth (\I |\mu|)) = \I\mu (n_+ \coth(|\mu| ) + n_- \cot (|\mu|)),
   \end{equation*}
   which is equivalent to
   \begin{equation*}
   	\cot|\mu| = - \frac{n_+}{n_-} \coth |\mu|.
   \end{equation*}
   With an analogous argument to the positive root case, we conclude that for every $k\le -1$ the interval $( k\pi,\, (k+1)\pi)$ contains exactly one solution $\mu_k$ of the above equation. Similarly, $\mu_k\approx k\pi - \arctan \frac{n_+}{n_-}$ for $k\to\infty$.
   
   We prove the claim about the multiplicities of the eigenvalues of $\AKone$. Let $I\subseteq \R$ be a finite interval. Then we denote by $m_{A}(I)$ (resp. $m_{B}(I)$) the sum of the algebraic multiplicities of the eigenvalues of $A$ (resp.\ $B$) which are contained in $I$. We know from \cite{BLMMT2016} 
   that for $0\notin I$ and
   \begin{equation}
   \label{eq:multiplicities}
   	m_{\AK}(I) - 1 \le  m_{\AKone}(I) \le  m_{\AK}(I) + 1.
   \end{equation}
   Choosing $I=( ((k-1)\pi )^2,\, (k\pi)^2)$ for $k\ge 1$ gives
   \begin{equation*}
   	m_{\AK}(I) =0\quad \mbox{and }\quad m_{\AKone}(I) = m_{\AKone}(\{\eta_k\}) \ge 1
   \end{equation*}
   which together with \eqref{eq:multiplicities} gives 
   \begin{equation*}
   	m_{\AKone}(\{\eta_k\}) = 1.
   \end{equation*}
   On the other hand, choosing $I=( ((k-1)\pi )^2,\, ( (k+1)\pi)^2)$ for $k\in\N$ gives
   \begin{equation*}
   	m_{\AK}(I) = m_{\AK}(\{(k\pi)^2\}) = n_+
   \end{equation*}
   and
   \begin{equation*}
   	m_{\AKone}(I) = m_{\AKone}(\{(k\pi)^2\}) + m_{\AKone}(\{\eta_{k-1}\})  + m_{\AKone}(\{\eta_k\}) = m_{\AKone}(\{(k\pi)^2\}) + 2.
   \end{equation*}
   Again, together with \eqref{eq:multiplicities},
   \begin{equation*}
   	m_{\AKone}((k\pi)^2) \le n_+ - 1.
   \end{equation*}
   Finally, we take a small neighborhood $I$ around $(k\pi)^2$ such that $\eta_k$ and $\eta_{k+1}$ are not in $I$. By \eqref{eq:multiplicities},
   \begin{equation*}
   	m_{\AKone}(\{(k\pi)^2\})=m_B(I)\geq   m_{\AK}(I) - 1 = n_+ - 1.
   \end{equation*}
The statement for the negative eigenvalues of $B$ follows analogously.
\end{proof}
The above theorem gives now the opportunity to deduce from the spectrum of  the operator $B$ the topology of the graph $G$.
\begin{corollary}
 Let $\eta_1$ be the first positive eigenvalue of the operator $B$. Then the ratio between $n_+$ and $n_-$ is given as
 $$
 \frac{n_+}{n_-}=-\coth(\sqrt{\eta_1}) \tan(\sqrt{\eta_1}).
 $$
\end{corollary}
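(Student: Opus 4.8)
The plan is to obtain the corollary directly from the characterization of the eigenvalues of $B$ in Theorem~\ref{thm:spectrumB}. By that theorem, $\eta_1$ is the unique solution in the interval $(((1-1)\pi)^2,\,(1\cdot\pi)^2) = (0,\pi^2)$ of the equation in item~(i) with $k=1$, namely
\[
   \coth(\sqrt{\eta_1}) = -\frac{n_+}{n_-}\,\cot(\sqrt{\eta_1}).
\]
One should also note that $\eta_1$ really is the \emph{first} positive eigenvalue of $B$: Theorem~\ref{thm:spectrumB} shows the positive spectrum of $B$ consists of the numbers $\eta_k$, $k\ge 1$, together with possibly the numbers $(k\pi)^2$, $k\ge 1$ (when $n_+\ge 2$), and since $0<\eta_1<\pi^2\le (k\pi)^2$ for all $k\ge1$ and $\eta_k>((k-1)\pi)^2\ge\pi^2$ for $k\ge 2$, none of these is smaller than $\eta_1$. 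So the first step is simply to invoke this characterization.

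The second step is to solve the displayed equation for $n_+/n_-$, which amounts to dividing by $\cot(\sqrt{\eta_1})$. The only thing to check is that $\cot(\sqrt{\eta_1})\neq 0$. Since $\sqrt{\eta_1}\in(0,\pi)$, the cotangent vanishes there only at $\pi/2$; but if $\sqrt{\eta_1}=\pi/2$ the equation above would force $\coth(\pi/2)=0$, contradicting $\coth>1$ on $(0,\infty)$. Hence $\cot(\sqrt{\eta_1})\neq 0$, so in particular $\tan(\sqrt{\eta_1})$ is finite, and dividing gives
\[
   \frac{n_+}{n_-} = -\frac{\coth(\sqrt{\eta_1})}{\cot(\sqrt{\eta_1})} = -\coth(\sqrt{\eta_1})\,\tan(\sqrt{\eta_1}),
\]
which is the asserted formula.

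There is essentially no obstacle: all the substantive work is already contained in Theorem~\ref{thm:spectrumB}, and the corollary is just the observation that the transcendental relation pinning down $\eta_1$ can be inverted to recover the edge-ratio $n_+/n_-$ of the graph. The only mild subtlety, namely the nonvanishing of $\cot(\sqrt{\eta_1})$, is disposed of above, so the argument is complete in a few short lines.
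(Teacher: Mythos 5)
Your argument is correct and coincides with what the paper intends: the corollary is an immediate inversion of the defining equation $\coth(\sqrt{\eta_1}) = -\tfrac{n_+}{n_-}\cot(\sqrt{\eta_1})$ from Theorem~\ref{thm:spectrumB}(i) with $k=1$, and the paper itself gives no separate proof because the step is exactly this division. Your additional check that $\cot(\sqrt{\eta_1})\neq 0$ (since $\coth>1$ on $(0,\infty)$) is a worthwhile, if minor, point of rigor that the paper leaves implicit.
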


\begin{remark*}
   \begin{enumerate}[label={(\roman*)}]
   \item  
   If $n_+ = n_- = n/2$, then 
   $\eta_k = \sign(k) ( ( |k| - \frac{1}{4})\pi )^2$ for all $k \in\mathbb Z\setminus\{0\}$ and $\sigma(\AKone)$ is symmetric with respect to $0$.

    \item If $\frac{n_-}{n_+}\to 0$, then the eigenvalues $\eta_k$ of $\AKone$ move to the right and tend to $(k\pi)^2$ (the $k$th positive eigenvalue of $\AK$) if $k> 0$ and to $-( (|k| - \frac{1}{2} )\pi)^2$ if $k<0$.
    Therefore the positive spectrum of $\AKone$ resembles the positive spectrum of $\AK$ while for the negative eigenvalues $\eta_k$ stay away from the eigenvalues of $\AK$.
   % \item If $\widetilde \AKone$ is the operator with $n_+$ and $n_-$ exchanged, then clearly $\sigma(\widetilde\AKone) = -\sigma(\AKone)$.
    % Hence, if $\frac{n_-}{n_+}\to \infty$, then the eigenvalues $\eta_k$ of $\AKone$ move to the left and tend to $( (k + \frac{1}{2} )\pi)^2$ if $k>0$
    % and to 
    % $-(k\pi)^2$ (the $k$th negative eigenvalue of $\AK$) if $k< 0$.
   \end{enumerate}
\end{remark*}

\begin{figure}
   \begin{center}
      \begin{tikzpicture}[scale=1, transform shape]
	 \begin{axis}[ymin=-3, ymax=4, xmin=-0.015, xmax=15,
	    axis x line=center,
	    axis y line=center,
	    xlabel={$\mu$},
	    xlabel style={right},
	    xtick={pi,2*pi, 3*pi, 4*pi},
	    xticklabels={$\pi$, $2\pi$, $3\pi$, $4\pi$},
	    ylabel style={left},
	    ytick={1},
	    yticklabels={$\frac{n_+}{n_-}$},
	    legend style={%at={(0.5,0.5)},
	    anchor=east,legend columns=3},
	    legend style={font=\tiny},
	    declare function={ 
	    lhs(\t) = (tanh( \t ))^(-1);
	    rhs(\t) = -cot(\t r);
	    tanrhs(\t) = tan(\t r);
	    }
	    ]
	    \pgfplotsset{
	    % this modifies the `every tick label' style
	    tick label style={ font=\footnotesize},
	    }
	    \addplot[blue, domain=0:15, samples=800, restrict y to domain=-20:20]{rhs(x)};
	    \addplot[red, domain=0:15, samples=500%, restrict y to domain=-3:3
	    ]{lhs(x)};
	 \end{axis}
      \end{tikzpicture}
   \end{center}
   \caption{Intersections of $-\cot\mu$ and $\frac{n_+}{n_-}\coth\mu$ .}
   \label{fig:neweigenvalue}
\end{figure}
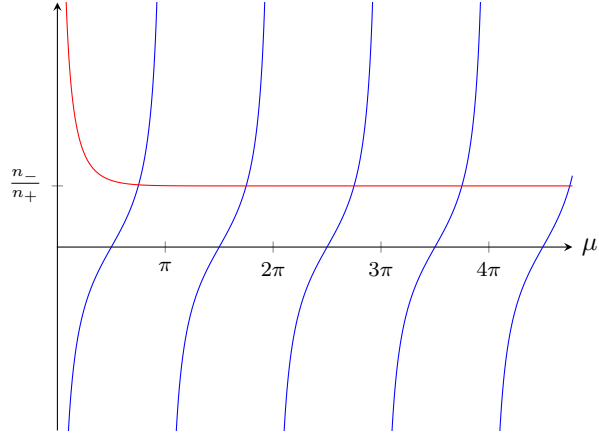
%%%}}}

\subsection{Similarity of \texorpdfstring{$\AKone$}{B} to a selfadjoint operator on a Hilbert space}
\label{subsection:similarity}

	For stating and proving the main result in this section, we start with two lemmas. Lemma \ref{lemma:invertible_operator} is adapted from Lemma~3.2 from \cite{CurgusLanger1989} and we compute the form domain of $JB$ in Lemma \ref{lemma:form_domain}.

\begin{lemma}
\label{lemma:invertible_operator}
   For every $\delta\in (0, 1/2)$ there exists a positive bounded and boundedly invertible operator $X: L^2(G)\to L^2(G)$ in the Hilbert space $(L^2(G),\scalar{\cdot}{\cdot})$ such that
   for every $f = (f_j)_{j=1}^n\in L^2(G)$ the following is true.
   \begin{enumerate}[label={(\roman*)}]
	\item\label{item:Xi} 
    $(f-Xf)_j(x) = 0$ for $x \in(1-\delta, 1)\subseteq I_j$;
    
	\item\label{item:Xii} 
        if $f_j$ has a continuous extension to $0$, then so does $(Xf)_j$ and $(Xf)_j(0) = 0$.
   \end{enumerate}
\end{lemma}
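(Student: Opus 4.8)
The plan is to reduce the statement to a one-edge construction and then build the required operator by adapting the device of {\'C}urgus and Langer \cite[Lemma~3.2]{CurgusLanger1989}. Since $L^2(G)=\bigoplus_{j=1}^n L^2((0,1))$ and both \ref{item:Xi} and \ref{item:Xii} are conditions on the individual components $(Xf)_j$, it suffices to construct a single operator $X_0$ on $L^2((0,1))$ that is positive, bounded and boundedly invertible, that coincides with the identity on $(1-\delta,1)$, and that maps every function with a continuous extension to $0$ to a function with a continuous extension to $0$ whose value there is $0$; one then takes $X:=\bigoplus_{j=1}^n X_0$, which inherits all the asserted properties edgewise.

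Next I would localise near the central vertex. A bounded positive operator on a complex Hilbert space is selfadjoint, and combining selfadjointness with the requirement $(X_0f)(x)=f(x)$ for a.e.\ $x\in(1-\delta,1)$ and all $f$ forces $X_0$ to be block diagonal with respect to $L^2((0,1))=L^2((0,1-\delta))\oplus L^2((1-\delta,1))$: the block out of $L^2((0,1-\delta))$ vanishes because $(X_0f)|_{(1-\delta,1)}$ is independent of $f|_{(0,1-\delta)}$, the $L^2((1-\delta,1))$-diagonal block is the identity, and selfadjointness annihilates the remaining off-diagonal block. Writing $X_0=Y\oplus I$, the problem is reduced to constructing a positive, bounded, boundedly invertible (hence selfadjoint) operator $Y$ on $L^2((0,1-\delta))$ that sends each function with a continuous extension to the endpoint $0$ to such a function whose value at $0$ equals $0$.

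The operator $Y$ is then obtained by adapting \cite[Lemma~3.2]{CurgusLanger1989}: one keeps $Y$ equal to the identity away from $0$ and modifies it only near $0$, by a change of variables compressing $(0,1-\delta)$ towards the endpoint $0$ strongly enough that the (unbounded) evaluation-at-$0$ functional is annihilated on the range, together with a selfadjoint positive correction chosen so that $Y$ stays boundedly invertible and so that the image of a function continuous at $0$ is again continuous at $0$. Once $Y$ is in hand, the verification that $X=\bigoplus_{j=1}^n(Y\oplus I)$ is positive, bounded and boundedly invertible and satisfies \ref{item:Xi} and \ref{item:Xii} is routine; with a little additional care the construction can moreover be arranged so that $X$ preserves $H^1$-regularity on each edge, a property that will be needed later when $X$ is applied to the form domain of $JB$.

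The whole argument hinges on the construction of $Y$, which I expect to be the main obstacle: one must kill the value at $0$ on the range while keeping $Y$ at once selfadjoint, uniformly positive and boundedly invertible on $L^2((0,1-\delta))$. The naive candidates fail: multiplication by a positive function vanishing at $0$ is not boundedly invertible, and the unitary weighted composition operator $f\mapsto\sqrt{\phi'}\,(f\circ\phi)$ attached to a diffeomorphism $\phi$ that is flat at $0$ does annihilate the boundary value (and can even be made $H^1$-preserving) but is not positive, while any symmetrisation of it reintroduces $\phi^{-1}$, which expands near $0$ and destroys either positivity or continuity at $0$. Circumventing this is exactly what the {\'C}urgus--Langer lemma achieves, and carrying that construction over to the present interval-with-Dirichlet-endpoint situation is the heart of the matter.
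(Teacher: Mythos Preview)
Your overall plan---reduce to a single edge and adapt the {\'C}urgus--Langer device---is exactly what the paper does, and your block-diagonal reduction $X_0=Y\oplus I$ is a clean way to isolate property~\ref{item:Xi}. But as you yourself flag, the proposal stops precisely at the point where the actual content lies: you never construct $Y$. Your discussion of why the naive candidates fail is accurate, but it does not lead anywhere, so what you have is a reduction to a lemma you have not proved.

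The missing idea is this: do not try to make $Y$ itself positive or invertible. Instead, find \emph{any} bounded operator $Z$ on $L^2((0,1))$, supported near $0$, such that whenever $u$ extends continuously to $0$ one has $(Zu)(0)=u(0)$ and $(Z^*u)(0)=-u(0)$; then set $X=1+Z^*Z$. This is automatically selfadjoint, satisfies $X\ge 1$ (hence is boundedly invertible), and $(Xu)(0)=u(0)+(Z^*Zu)(0)=u(0)-(Zu)(0)=0$. The paper builds such a $Z$ as a cut-off linear combination of two dilations,
\[
   (Zu)(x)=\bigl(\alpha\, s\, u(sx)+\beta\, t\, u(tx)\bigr)\varphi(x),
\]
with $s\neq t$ in $(1,2)$ and $\varphi\in C^1$ equal to $1$ near $0$ and vanishing outside $[0,\delta]$. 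A short computation gives $(Zu)(0)=(\alpha s+\beta t)u(0)$ and $(Z^*u)(0)=(\alpha+\beta)u(0)$, so the two boundary conditions become the $2\times 2$ linear system $\alpha s+\beta t=1$, $\alpha+\beta=-1$, which is solvable since $s\neq t$. This single trick sidesteps all the obstructions you list: positivity and invertibility come for free from the form $1+Z^*Z$, and no inverse dilation ever enters the final operator $X$.
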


\begin{proof}
   Let $0<\delta<1/2$ and choose $\varphi\in C^1([0,1])$, $0\leq\varphi\leq1$, which is equal to $1$ on $[0,\delta/2)$ and vanishes in $[\delta,1]$. Define the linear operator $Y=Y_1\oplus\cdots\oplus Y_n$ in $L^2(G)$ as
   \begin{equation}
      \label{eq:axillary_operator_Y}
      Y_j u_j(x) = 
      \begin{cases}
	 \left(\alpha_j s_j u_j(s_j x) + \beta_j t_j u_j(t_j x)\right)\varphi(x) \quad & \text{for }\ x\in [0,\delta], \\[1ex]
	 0\quad & \text{for }\ x\in (\delta,1],
      \end{cases}
   \end{equation}
   where  $s_j ,t_j \in (1,\, 2)$, $s_j \neq t_j$ and $\alpha_j, \beta_j \in \mathbb R$ for $j=1,\dots, n$.  
   Below in \eqref{eq:alphabeta} we will determine an appropriate choice for the coefficients $\alpha_j$ and $\beta_j$.
   It is easy to see that $Y$ is bounded and that for all $j=1, \dots, n$
   \begin{align}
      \label{eq:axillary_operator_Ystar}
      ( Y^*u)_j (x) = Y_j^*u_j (x) 
      =  
      \alpha_j u_j\left(\frac{x}{s_j}\right)\varphi \left(\frac{x}{s_j}\right) 
      + \beta_j u_j\left(\frac{x}{t_j}\right)\varphi \left(\frac{x}{t_j}\right),
   \end{align}
   Moreover, if $u_j$ admits a continuous extension at $0$, then so do $(Yu)_j$ and $(Y^*u)_j$. Note that by construction, $(Yu)_j$ and $(Y^*u)_j$ are zero in $[\delta, 1)\subseteq I_j$ for each $j=1, \dots, n$ and $u\in L^2(G)$. 
   Next we want to choose the coefficients $\alpha_j$ and $\beta_j$ such that 
   \begin{equation}
      \label{eq:alphabeta}
      (Yu)_j(0) = u_j(0) \qquad\text{and}\qquad  (Y^*u)_j(0) = -u_j(0)
   \end{equation}
   whenever $u_j$ has a continuous extension to $0$. By construction of $Y$, this is clearly equivalent to
   \begin{equation}
      \alpha_j s_j + \beta_j t_j  = 1, \qquad \alpha_j + \beta_j  = -1, \qquad\qquad j = 1, \dots, n.
   \end{equation}
   This system of linear equations has the unique solution
   \begin{equation*}
      \alpha_j =  \frac{1+t_j}{s_j- t_j} \quad \mbox{and }\quad\beta_j =  -\frac{1+s_j}{s_j- t_j}.
   \end{equation*}
   From all the above it follows that the operator $X := 1 + Y^*Y$ has the desired properties.
\end{proof}

The next lemma describes the form domain of $J\AKone$. Since $J\AKone$ is a positive operator in the Hilbert space $H$ by \eqref{Bananenwurst}, its form domain $\mD[J\AKone]$ is defined and, according to the Second Representation Theorem, it is equal to $\mD\left((J\AKone)^{1/2}\right)$ (see, e.g., \cite[VI-§ 2.6]{K}). The following lemma gives a more explicit formulation and a direct computation.

\begin{lemma}
   \label{lemma:form_domain}
   The form domain $\mD[J\AKone]$ of $J\AKone$ is equal to
   \begin{align}
      \label{eq:formdomainAone}
      \mD[J\AKone] = \left\{ f\in \widehat H^1_D(G)  : f_1(0) = \dots = f_n(0) \right\}.
   \end{align}
\end{lemma}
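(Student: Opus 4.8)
The plan is to identify $\mD[J\AKone]$ by using the Second Representation Theorem together with a direct computation of the quadratic form associated to $J\AKone$. First I would note that $J\AKone$ is the positive selfadjoint operator in the Hilbert space $L^2(G)$ whose action is $f \mapsto (-f_j'')_{j=1}^n$ on the domain $\mD(\AKone)$, and that its form is given, via integration by parts as in \eqref{Bananenwurst}, by
\[
   \mathfrak{b}[f] = \scalar{J\AKone f}{f} = \|f'\|_{L^2(G)}^2, \qquad f\in\mD(\AKone),
\]
where the boundary terms at the central vertex cancel thanks to the Kirchhoff condition $\sum_j f_j'(0) = 0$ and the continuity of $f$ at $0$. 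The form domain $\mD[J\AKone]$ is then the completion of $\mD(\AKone)$ in the norm $(\|f\|_{L^2(G)}^2 + \|f'\|_{L^2(G)}^2)^{1/2}$, and the task is to show this completion is exactly the set on the right-hand side of \eqref{eq:formdomainAone}.

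The two inclusions would be handled separately. For the inclusion ``$\subseteq$'', I would observe that the form graph norm on $\mD(\AKone)$ is exactly the $\widehat H^1_D(G)$-norm (edgewise $H^1$ plus Dirichlet at the outer vertices); since every $f\in\mD(\AKone)$ is continuous at $0$, and continuity at $0$ (i.e.\ $f_1(0) = \dots = f_n(0)$) together with the Dirichlet conditions at the outer vertices are closed conditions under $H^1$-convergence (point evaluation being continuous on $H^1(0,1)$), any limit of a Cauchy sequence in $\mD(\AKone)$ again lies in the set on the right of \eqref{eq:formdomainAone}. For the reverse inclusion ``$\supseteq$'', I would take $f$ in that set and approximate it: one can first reduce to functions that are smooth on each edge and then, crucially, modify them near the central vertex so that the Kirchhoff condition $\sum_j f_j'(0) = 0$ holds. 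A convenient way to do this is to subtract a small correction supported near $0$; alternatively, since the Kirchhoff condition is \emph{not} a form-core condition but rather a domain condition, one shows directly that $\mathcal C^\infty$-on-each-edge functions satisfying only continuity at $0$ and the outer Dirichlet condition already lie in the form-closure of $\mD(\AKone)$ — this is where the operator $X$ from Lemma~\ref{lemma:invertible_operator} (or a similar cutoff near $0$) can be invoked to show that the constraint $\sum_j f_j'(0)=0$ is removable in the form sense.

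The main obstacle I anticipate is precisely this last point: verifying that the Kirchhoff condition at $0$ disappears upon taking the form closure, i.e.\ that $\mD(\AKone)$ and the larger set of edgewise-$H^2$ functions with only continuity at $0$ have the same closure in the form norm. The cleanest argument is that both $\AKone$ and the auxiliary selfadjoint operator $A$ (with Dirichlet condition $f(0)=0$) arise from the same symmetric operator $S$ via the boundary triple, and a general principle from extension theory says that selfadjoint extensions corresponding to a \emph{bounded} Weyl function behave like finite-rank perturbations; since here the defect numbers are $n_+(S)=n_-(S)=1$, the form domains of all extensions with nonempty resolvent set that are semibounded differ only by this one-dimensional freedom, which is form-negligible. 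Concretely one checks that $\gamma(\lambda)\mathbb C \subset \mD[J\AKone]$ for $\lambda\in\rho(\AKone)\cap\rho(A)$ and that adding this one-dimensional span to $\mD[JA] = \{f\in\widehat H^1_D(G) : f(0)=0\}$ (which is $\widehat H^1_0$-type and well known) produces exactly the set in \eqref{eq:formdomainAone}. Collecting both inclusions then gives the stated equality.
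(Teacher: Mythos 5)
Your first inclusion, $\mD[J\AKone]\subseteq\{f\in\widehat H^1_D(G):f_1(0)=\dots=f_n(0)\}$, is correct and coincides with the paper's argument: the form norm is the edgewise $H^1$-norm, point evaluation at the endpoints of each edge is continuous with respect to it, so the Dirichlet and continuity conditions survive the closure while the Kirchhoff condition does not. The reverse inclusion is where the content of the lemma lies, and there your proposal stops at a sketch with a genuine gap. The paper's proof of that half is an explicit one-dimensional decomposition: fix a single smooth $\phi$ with $\phi_j\equiv 1$ on $[0,1/4]$ and $\phi_j(1)=0$; then $\phi\in\mD(\AKone)$ (the Kirchhoff sum vanishes because $\phi$ is constant near $0$), every $f$ in the right-hand side of \eqref{eq:formdomainAone} splits as $f=\bigl(f-f(0)\phi\bigr)+f(0)\phi$ with $f-f(0)\phi\in\bigoplus_{j}H^1_0(I_j)$, and the latter space lies in the form closure because $\bigoplus_{j}\mathcal C_0^\infty(I_j)\subseteq\mD(\AKone)$ is $\|\cdot\|_1$-dense in it. Neither of your two proposed routes actually produces this (or an equivalent) construction.

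Concretely: in route (a) you say one should ``modify the approximants near the central vertex'' or invoke the operator $X$ of Lemma~\ref{lemma:invertible_operator}, but $X$ does not finish the job --- writing $f=Xf+(f-Xf)$ puts $Xf$ into $\bigoplus_j H^1_0(I_j)$, yet $f-Xf=-Y^*Yf$ is merely an $H^1$-function with nonzero value at $0$, and its membership in $\mD[J\AKone]$ is exactly the question being asked; no explicit correction function is ever exhibited. In route (b) the step ``$\gamma(\lambda)\mathbb C\subseteq\mD[J\AKone]$'' is asserted but not proved and is essentially equivalent to the lemma itself: a defect element $\gamma(\lambda)c$ lies in $\mD(\SK^+)\setminus\mD(\AKone)$, so nothing places it in the form domain for free, and the ``general principle'' about form domains of extensions with finite defect numbers is invoked without a precise statement (calling the one-dimensional difference ``form-negligible'' is also misleading, since that difference is precisely what distinguishes the answer from $\mD[J\AK]$). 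Route (b) can be repaired --- $J\AK$ is the Friedrichs extension of $J\SK$, so Krein's theorem gives $\mD[J\AK]=\bigoplus_j H^1_0(I_j)\subseteq\mD[J\AKone]$, after which any single element of $\mD(\AKone)$ with nonzero value at $0$ supplies the missing dimension --- but the inclusion $\mD[J\AK]\subseteq\mD[J\AKone]$ is itself most easily proved by the same density argument the paper uses, so the extension-theoretic detour buys nothing. As written, the inclusion $\supseteq$ is not established.
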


\begin{proof}
   The claim can be shown with the general results from \cite[Theorem 1.4.11]{BerkolaikoKuchment}, but we give a direct proof as our setup is relatively simple. It follows from  \eqref{Bananenwurst} that $\mD[J\AKone]$ is the closure of $\mD(J\AKone)$ with respect to the norm
   \begin{equation*}
      \|f\|_1 := \|f\|_{L^2(G)} + \|f'\|_{L^2(G)}
   \end{equation*}
   (see, e.g., \cite[VI-§ 2.1]{K}). Let us denote the right hand side of \eqref{eq:formdomainAone} by $\mD$. We first show $\mD[J\AKone]\subseteq \mD$. Let us fix $f\in \mD[J\AKone]$ and choose a sequence $(g_t)_{t\in\N}\subseteq \mD(J\AKone)$ such that $g_t\rightarrow f$ with respect to $\|\cdot\|_1$, Then $(g_{t,j})_{t\in\N}$ converges to $f_j$ in $H^1(I_j)$ on each edge $I_j$. Since convergence in $H^1(I_j)$ implies pointwise convergence of $g_{t,j}$ to $f_j$ in the endpoints $0$ and in $1$ on each edge \cite[Lemma 1.3.8.]{BerkolaikoKuchment}, it follows that
   \begin{equation*}
      f_j(1) = \lim_{t\to\infty} g_{t,j}(1) = 0\quad\mbox{and } \quad f_j(0) - f_k(0) = \lim_{t\to\infty} ( g_{t,j}(0) - g_{t,k}(0)) = 0
   \end{equation*}
   for all $j,k= 1,\dots, n$. Hence $f\in \mD$.

   Now we show the converse inclusion $\mD\subseteq \mD[J\AKone]$. To this end we set 
   \begin{equation*}
      \mD_0 := \bigoplus_{j=1}^n H_0^1(I_j).
   \end{equation*}
   Since $H^1_0(I_j)$ is the closure of  $\mathcal C_0^\infty(I_j)$ with  respect to the norm $\|\cdot\|_1$, we have
   \begin{equation*}
      \mD_0 = \overline{\bigoplus_{j=1}^n \mathcal C_0^\infty(I_j)}^{\|\cdot\|_1} \subset\ \overline{\mD(J\AKone)}^{\|\cdot\|_1}= \mD[J\AKone].
   \end{equation*}
   Fix an infinitely often differentiable function 
   $\phi$ defined on $G$
   such that for all $j=1,\dots, n$
   \begin{equation*}
      \phi_j(x) = 1\quad\text{if }\ x\in [0,1/4]
      \qquad\text{and}\qquad
      \phi_j(1) = 0.
   \end{equation*}
   Then clearly $\mD = \mD_0 + \linspan\{ \phi\}$.
   Moreover, $\phi\in \mD(J\AKone)\subseteq \mD[J\AKone]$.
   Hence  
   \begin{equation*}
      \mD = \mD_0 + \linspan\{ \phi\}\subseteq \mD[J\AKone].
      \qedhere
   \end{equation*}
\end{proof}

The next theorem is the main result.
\begin{theorem}
   The operator $\AKone$ is similar to a selfadjoint operator in the Hilbert space $L^2(G)$. In particular, its eigenfunctions form a Riesz basis.
\end{theorem}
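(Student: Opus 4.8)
The plan is to verify criterion \ref{Goettingen:i} of Theorem~\ref{Goettingen} for the positive operator $\AKone$ in the Krein space $(L^2(G),\kip)$, which by that theorem already gives both similarity to a selfadjoint operator in $(L^2(G),\scalar{\cdot}{\cdot})$ and, since the spectrum of $\AKone$ is discrete by Theorem~\ref{thm:spectrumB}, the existence of a Riesz basis of eigenfunctions. Since $0\in\rho(\AKone)$ was shown in Theorem~\ref{thm:spectrumB}, the theorem applies. Concretely, I need a bounded, boundedly invertible operator $W$ that is positive with respect to $\kip$ and satisfies $W\,\mD\bigl((J\AKone)^{1/2}\bigr)\subseteq \mD\bigl((J\AKone)^{1/2}\bigr)$. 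By Lemma~\ref{lemma:form_domain} together with the Second Representation Theorem, $\mD\bigl((J\AKone)^{1/2}\bigr)=\mD[J\AKone]=\{f\in\widehat H^1_D(G): f_1(0)=\cdots=f_n(0)\}$, so the invariance condition is an explicit and checkable statement about Sobolev functions on the star graph.

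The candidate for $W$ is the operator $X=1+Y^*Y$ produced in Lemma~\ref{lemma:invertible_operator} for some fixed $\delta\in(0,1/2)$; it is bounded, boundedly invertible, and $\scalar{\cdot}{\cdot}$-positive. However, $X$ need not be $\kip$-positive, so I would either note that $X$ commutes appropriately with the fundamental symmetry $J$ (the operators $Y_j$ act edgewise, so $Y$, $Y^*$ and hence $X$ are block diagonal and commute with $J$; a block-diagonal, $\scalar{\cdot}{\cdot}$-positive, boundedly invertible operator that commutes with $J$ is automatically $\kip$-positive since $[Xf,f]=\scalar{JXf}{f}=\scalar{XJf}{Jf}\cdot(\pm1)$ — here one must be a little careful, so the cleaner route is to observe $[Xf,f]=\scalar{Xf}{Jf}$ and use that $X$ commutes with $J$ and is $\scalar{\cdot}{\cdot}$-positive with bounded inverse to get $[Xf,f]=\scalar{X^{1/2}Jf}{X^{1/2}Jf}\ge c\|Jf\|^2=c\|f\|^2$, hence $W:=X$ works), and then set $W:=X$.

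The core verification is that $X$ maps $\mD[J\AKone]$ into itself. Take $f\in\mD[J\AKone]$, i.e.\ $f\in\widehat H^1_D(G)$ with a common value $f(0)$ at the central vertex. I must show $Xf=f+Y^*Yf$ lies in $\widehat H^1_D(G)$ and is continuous across $0$. First, $Y^*Yf$ is supported in $\bigcup_j[0,\delta]\subseteq G$ and vanishes identically on $(1-\delta,1)$ by property~\ref{item:Xi}, so near the outer vertices $Xf=f$ and the Dirichlet conditions $X f_j(1)=f_j(1)=0$ are inherited. Second, each $(Y^*Yf)_j$ is $H^1$ on $I_j$: $Y$ and $Y^*$ are built from dilations $u_j(x)\mapsto u_j(s_j x)$ multiplied by the fixed $C^1$ cutoff $\varphi$, and such operations preserve $H^1((0,1))$ (one should check that the $\varphi$-factor and the $s_j\in(1,2)$-rescaling keep everything inside $(0,1)$ and $H^1$-bounded, which is where a short routine estimate is needed), so $Xf\in\bigoplus_j H^1(I_j)$. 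Third, continuity at $0$: by property~\ref{item:Xii} each $(Xf)_j$ has a continuous extension to $0$, and by the defining relations \eqref{eq:alphabeta} one has $(Yf)_j(0)=f_j(0)=f(0)$ and $(Y^*u)_j(0)=-u_j(0)$, so $(Y^*Yf)_j(0)=-(Yf)_j(0)=-f(0)$, whence $(Xf)_j(0)=f_j(0)-f(0)=0$ — the same value for every $j$. Thus $Xf\in\mD[J\AKone]$ (in fact with $Xf(0)=0$), and $W=X$ satisfies \ref{Goettingen:i}.

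The main obstacle is not conceptual but technical: confirming that the edgewise operators $Y_j,Y_j^*$ genuinely map $H^1((0,1))$ (and the subspace of functions with a one-sided limit at $0$) boundedly into itself, keeping track of the cutoff $\varphi$ and the dilation factors so that no mass escapes the interval and the boundary behaviour at $1$ is exactly as claimed in Lemma~\ref{lemma:invertible_operator}; and, secondarily, cleanly arguing the $\kip$-positivity of $W=X$ from its block-diagonal structure and commutation with $J$. Once these are in place, Theorem~\ref{Goettingen} delivers the conclusion immediately: $\AKone$ is similar to a selfadjoint operator in $L^2(G)$, and because $\sigma(\AKone)$ is discrete, statement~\ref{Goettingen:iii} gives a Riesz basis of eigenfunctions of $\AKone$ in $L^2(G)$.
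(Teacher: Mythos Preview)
Your overall strategy---verify criterion~\ref{Goettingen:i} of Theorem~\ref{Goettingen} using the operator $X=1+Y^*Y$ from Lemma~\ref{lemma:invertible_operator}---matches the paper's, and your verification that $X$ maps $\mD[J\AKone]$ into itself (in fact into $\bigoplus_j H^1_0(I_j)$, with $(Xf)(0)=0$) is correct and essentially identical to what the paper does. The genuine gap is in your $\kip$-positivity argument for $W=X$. Commuting with $J$ and being Hilbert-positive does \emph{not} make an operator Krein-positive: the identity $X=I$ already fails, since $[If,f]=[f,f]$ is indefinite. Concretely, your claimed identity $[Xf,f]=\scalar{X^{1/2}Jf}{X^{1/2}Jf}$ is false; starting from $[Xf,f]=\scalar{Xf}{Jf}$ and using $XJ=JX$ one only gets $\scalar{X^{1/2}f}{JX^{1/2}f}=[X^{1/2}f,X^{1/2}f]$, which is again an indefinite quantity.

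The fix the paper uses is to take $W=JX$ rather than $W=X$. Then $[Wf,f]=[JXf,f]=\scalar{JXf}{Jf}=\scalar{Xf}{f}\ge\|f\|^2$, so Krein-positivity is immediate from the Hilbert-space positivity of $X$. The price is that one must now check that $JX$, not just $X$, preserves $\mD[J\AKone]$; but this follows directly from the fact you already established, namely $X(\mD[J\AKone])\subseteq\bigoplus_j H^1_0(I_j)$, together with the trivial observation that $J$ maps $\bigoplus_j H^1_0(I_j)$ to itself and $\bigoplus_j H^1_0(I_j)\subseteq\mD[J\AKone]$. So your computation that $(Xf)_j(0)=0$ for every $j$ is exactly the ingredient needed---it is what makes the image $J$-invariant---you just need to insert the factor $J$ in the definition of $W$.
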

\begin{proof}
   Since $\AKone$ is nonnegative with $0\in \rho(\AKone)$ it remains to show that there exists a positive, bounded and boundedly invertible operator 

   $$
   W:L^2(G) \to L^2(G) \quad \mbox{such that} \quad 
   W(\mD(( J\AKone )^{\frac{1}{2}}))) \subseteq 
   \mD(( J\AKone )^{\frac{1}{2}}),
   $$
   cf.\ Theorem~\ref{Goettingen}.
   We claim that the operator $W = JX$ with $X$ from Lemma~\ref{lemma:invertible_operator} is such an operator.
   Clearly, $X$ is bounded and boundedly invertible in $L^2(G)$, hence $W$ is bounded and boundedly invertible.
   Moreover, the positivity of $X$ in the Hilbert space $(L^2(G),\scalar{\cdot}{\cdot})$ gives the positivity of $W$ in the Krein space $(L^2(G),\kip)$.
   By construction, 
   $$
   Xf\in \bigoplus_{j=1}^n H^1(I_j) \quad 
   \mbox{if} \quad f\in \bigoplus_{j=1}^n H^1(I_j),
   $$
   hence by Lemma \ref{lemma:form_domain}
   $$
   X(\mD[J\AKone])\subseteq X\Big(\bigoplus_{j=1}^n H^1(I_j)\Big)\subseteq \bigoplus_{j=1}^n H^1(I_j).
   $$
   If $f\in \mD( (J\AKone)^{\frac{1}{2}}) )$, then $f_j(1) = 0$ for all $j=1,\dots, n$, 
   hence \ref{item:Xi} from Lemma~\ref{lemma:invertible_operator} shows that 
   $$
   f_1(1) = \dots = f_n(1) = 0
   $$
   and \ref{item:Xii} from the same lemma gives 
   $$
   f_1(0) = \dots = f_n(0) = 0.
   $$
   In summary, 
   $$
   Xf\in \bigoplus_{j=1}^n H_0^1(I_j).
   $$
   Clearly 
   $$
   J \Big( \bigoplus_{j=1}^n H_0^1(I_j) \Big) = \bigoplus_{j=1}^n H_0^1(I_j) \subseteq \mD[J\AKone].
   $$
   Therefore
   \begin{equation*}
      W(\mD[J\AKone]) = JX(\mD[J\AKone])
      \subseteq J \Big( \bigoplus_{j=1}^n H_0^1(I_j) \Big)
      \subseteq \mD[J\AKone].
      \qedhere
   \end{equation*}
\end{proof}

\bibliography{biblio}
\bibliographystyle{acm}
\noindent
Edison Leguizamon, {\tt ej.leguizamon@uniandes.edu.co}\\
Monika Winklmeier, {\tt mwinklme@uniandes.edu.co}\\
Departamento de Matem\'aticas\\
Universidad de Los Andes\\
Cra.\ 1a No 18A-70, 111711 Bogotá,
Colombia

\ \\
Carsten Trunk, {\tt carsten.trunk@tu-ilmenau.de}\\
Mitsuru Wilson, {\tt mitsuru.wilson@tu-ilmenau.de}\\
Ilmenau University of Technology\\
	PF 10 05 65, D--98684 Ilmenau, Germany

\end{document}